\DeclareMathAlphabet\mathbfcal{OMS}{cmsy}{b}{n}
\def\PP{\mathsf{P}}
\def\AA{\mathsf{A}}
\def\BB{\mathsf{B}}
\def\CC{\mathsf{C}}
\def\Q{{\mathcal{Q}}} 
\def\k{\mathbb R} 
\def\k{{\kappa}}
\def\P{{\mathbb{P}}}
\def\mid{{\rm mid}}
\def\B{{\mathbb{B}}}
\def\A{\mathbb{A}}
\begin{document}

\newtheorem{theorem}{Theorem}[section]
\newtheorem{lemma}[theorem]{Lemma}
\newtheorem{proposition}[theorem]{Proposition}
\newtheorem{corollary}[theorem]{Corollary}
\newtheorem{problem}[theorem]{Problem}
\newtheorem{construction}[theorem]{Construction}

\theoremstyle{definition}
\newtheorem{defi}[theorem]{Definitions}
\newtheorem{definition}[theorem]{Definition}
\newtheorem{notation}[theorem]{Notation}
\newtheorem{remark}[theorem]{Remark}
\newtheorem{example}[theorem]{Example}
\newtheorem{question}[theorem]{Question}
\newtheorem{comment}[theorem]{Comment}
\newtheorem{comments}[theorem]{Comments}

\newtheorem{discussion}[theorem]{Discussion}

\renewcommand{\thedefi}{}

\long\def\alert#1{\smallskip{\hskip\parindent\vrule%
\vbox{\advance\hsize-2\parindent\hrule\smallskip\parindent.4\parindent%
\narrower\noindent#1\smallskip\hrule}\vrule\hfill}\smallskip}

\def\ff{\frak}
\def\tf{torsion-free}
\def\Spec{\mbox{\rm Spec }}
\def\Proj{\mbox{\rm Proj }}
\def\hgt{\mbox{\rm ht }}
\def\type{\mbox{ type}}
\def\Hom{\mbox{ Hom}}
\def\rank{\mbox{rank}}
\def\Ext{\mbox{ Ext}}
\def\Tor{\mbox{ Tor}}
\def\ker{\mbox{ Ker }}
\def\Max{\mbox{\rm Max}}
\def\End{\mbox{\rm End}}
\def\xpd{\mbox{\rm xpd}}
\def\Ass{\mbox{\rm Ass}}
\def\emdim{\mbox{\rm emdim}}
\def\epd{\mbox{\rm epd}}
\def\repd{\mbox{\rm rpd}}
\def\ord{\mbox{\rm ord}}

\def\htt{\mbox{\rm ht}}

\def\DD{{\mathcal D}}
\def\EE{{\mathcal E}}
\def\FF{{\mathcal F}}
\def\GG{{\mathcal G}}
\def\HH{{\mathcal H}}
\def\II{{\mathcal I}}
\def\LL{{\mathcal L}}
\def\MM{{\mathcal M}}

\def\k{\mathbb{k}}





\title[Bisector fields and    pencils of conics]{Bisector fields and    pencils of conics}

\author{Bruce Olberding$^*$} 
\address{Department of Mathematical Sciences, New Mexico State University, Las Cruces, NM 88003-8001}

\email{bruce@nmsu.edu}

\author{Elaine A.~Walker}
\address{Las Cruces, NM 88011}

\email{miselaineeous@yahoo.com}

\begin{abstract}   
We introduce the notion of a bisector field, which is a maximal collection of pairs of lines such that for each line in each pair,  the midpoint of the points where the line crosses every pair is the same, regardless of choice of pair.  We use this 
to study asymptotic properties of pencils of affine conics over fields and  show 
 that  pairs of lines in the plane that occur as the   asymptotes of hyperbolas from a pencil of affine conics belong to a bisector field. By including also pairs of parallel lines arising from degenerate parabolas in the pencil, we obtain a full characterization: 
Every bisector field arises  from a pencil of affine conics, and vice versa, every nontrivial pencil of affine conics is asymptotically a bisector field. Our main results are valid  over any field of characteristic other than $2$ and hence hold in the classical Euclidean setting as well as in Galois geometries. 
 
%
%
%

 \end{abstract}

\subjclass{Primary 52C30, 51N10} 

\keywords{Pencil of conics, affine conic, bisector field, quadrilateral}

\thanks{$^*$Corresponding author}

\thanks{\today}


\maketitle

\section{Motivation}

This article focuses on the question of which pairs of lines occur as the asymptotes of hyperbolas in a pencil of affine conics.   
 We were led to this question from several directions because the solution we give to it, the bisector fields of the title, appeared for us as a solution to other seemingly unrelated questions, and the connection between this idea and asymptotes of hyperbolas in pencils seems not to appear in the literature.
%
In this paper, we introduce bisector fields and establish their connection to pencils of conics. In sequels to this article we consider other applications of bisector fields, some of which we describe later in this section in order to help motivate the concept.

   Beltrami \cite[Section~2]{Beltrami}  (but see also  \cite{Vac})    dealt with what can be considered a projective version of our asymptotes question.  We mention it here  by way of contrast with our results.  Given a pencil of projective conics and a line $\ell$ in the projective plane, Beltrami shows the envelope of the pairs of lines that are tangent to 
a conic in the pencil at the points where $\ell$ crosses the conic  is  
a  rational quartic curve with three cusps, possibly two of which are imaginary. Thus, with $\ell$ designated the line at infinity and everything else restricted to the resulting affine chart, 
the asymptotes of the hyperbolas in the  pencil  are the lines tangent to a rational quartic curve with three cusps, at least one of which is real. The cusps of this envelope are visible in Figures 1(a) and 1(b).

Our approach
ultimately does give a different proof of Beltrami's theorem, but we postpone the connection with his tricuspidal quartic curve to the article \cite{OW6}, where, using the results of the present paper and projective duality, we arrive at his theorem from a rather different and more general  starting place involving planar arrangements of lines. The aim in the present article is  to give a very different type of criterion for when pairs of lines occur as asymptotes of hyperbolas in a pencil of affine conics, a criterion that involves only how the  lines cross each other.    In our approach, 
 the underlying field~$\k$ seldom matters, other than that it has characteristic not equal to $2$. (We avoid characteristic $2$ because    we work with midpoints.)
  Our main theorem is that whether a set $\A$ of pairs of intersecting lines in the affine plane over $\k$ is the set of asymptotes for the hyperbolas in a  pencil is detectable from an unusual feature of the geometry of the pairs, namely that each line  in each pair simultaneously bisects  all other pairs in the set~$\A$. 
   What bisects means is that the midpoint of the points where the line crosses a pair of lines is the same regardless of the pair that is chosen from~$\A$.  
   
   To state the main theorem more precisely, we need to consider not only  asymptotes of hyperbolas from a pencil but  pairs of parallel lines that share a midline with a pair of parallel lines in the pencil. Taken together, the asymptotes and the pairs of parallel lines are the degenerate conics in an affine net generated by the pencil and the line at infinity. We call such a  collection of pairs of lines an {\it asymptotic pencil} since it represents the asymptotic behavior of a pencil of affine conics. The main theorem 
  of the article is then that a set  of pairs of lines is a nontrivial asymptotic pencil    if and only if  
       it is   a {\it bisector field}, 
      a collection $\B$ of pairs of lines such that each line  in each pair simultaneously bisects  all other pairs in the set $\B$ and such that $\B$ is maximal with respect to this bisection property---maximal in the sense that $\B$ cannot be enlarged without destroying the bisection property. 
      Figures~1 and~2 give examples of pencils and their resulting bisector fields. 
      
         \begin{figure}[h] 
     \label{9noobconics}
 \begin{center}
\includegraphics[width=1\textwidth,scale=.09]{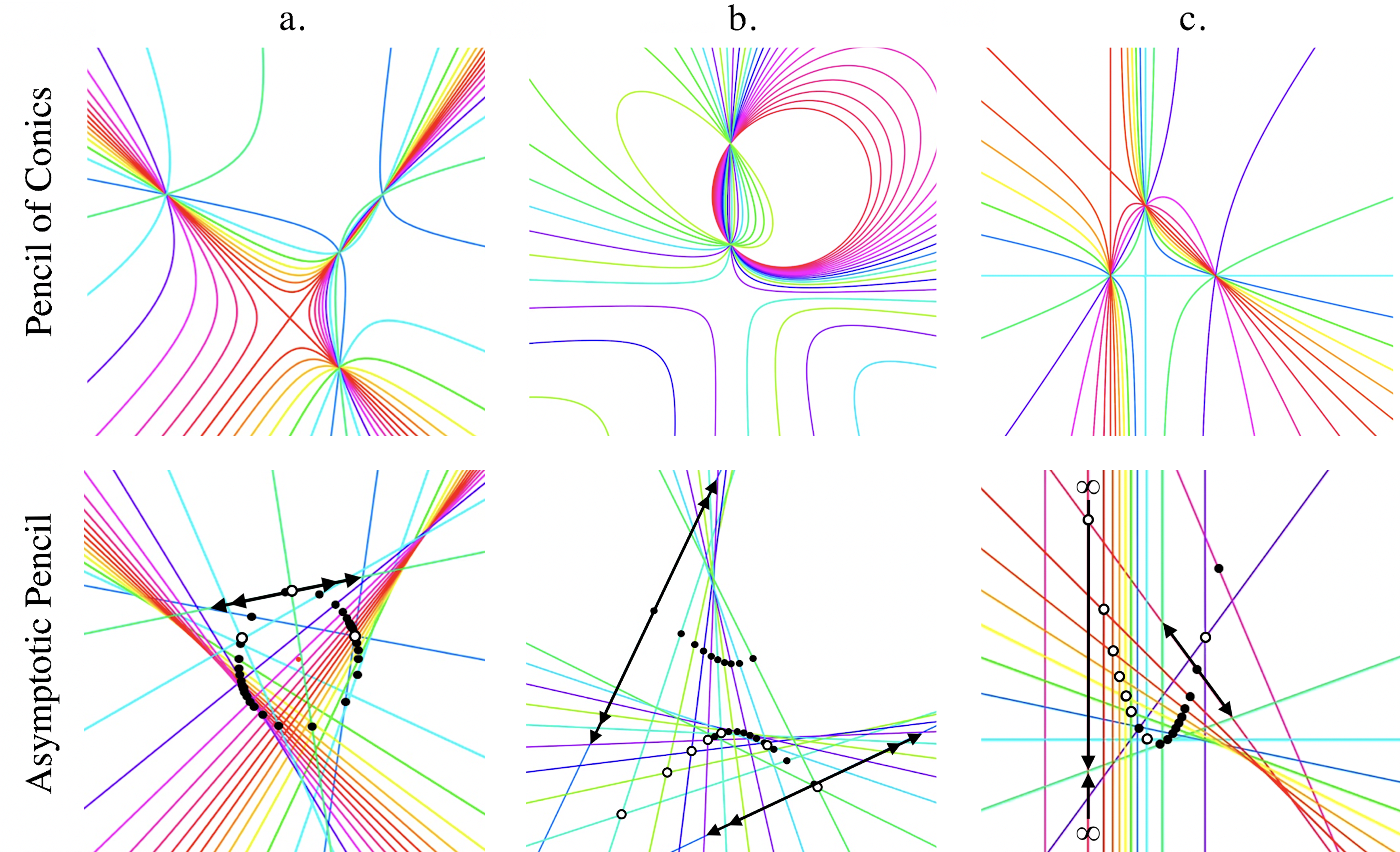} 
 \end{center}
 \caption{Selected conics from three pencils of conics (top row) and their asymptotic pencils (bottom row). The asymptotes of each hyperbola in the top row appear in the asymptotic pencil below.  By Theorem~\ref{main theorem 1}, the asymptotic pencils are bisector fields. The black points in the bisector fields are the midpoints of the bisectors; each white point is an intersection of the lines in a  bisector pair, which is the center of the degenerate hyperbola. The black arrows in the bottom row help illustrate the bisection property. }
\end{figure}

    \begin{figure}[h] 
     \label{9noobconics}
 \begin{center}
\includegraphics[width=1\textwidth,scale=.09]{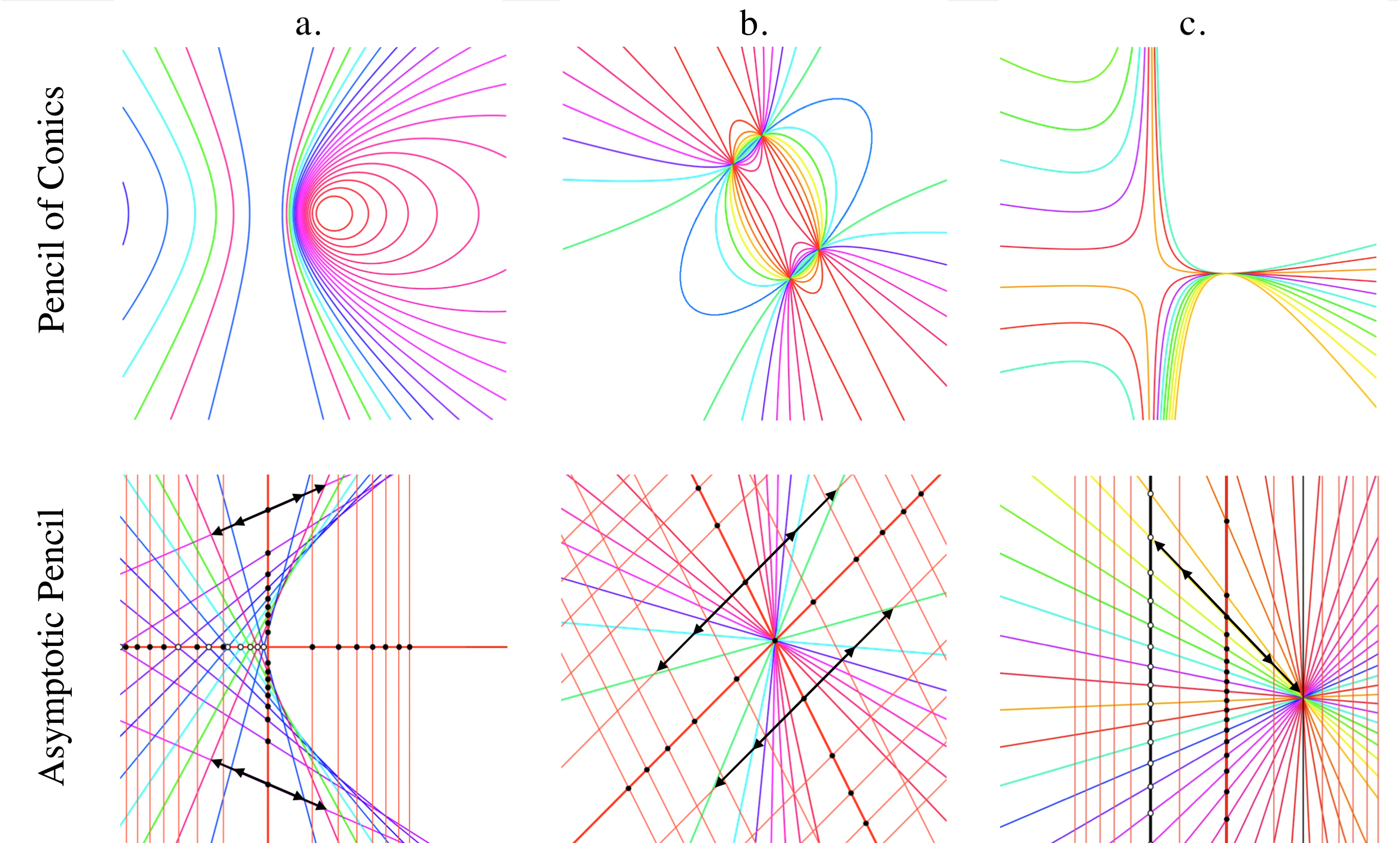} 
 \end{center}
 \caption{
 As in Figure 1, the top row contains pencils of conics and the bottom row the corresponding asymptotic pencils and bisector fields. 
These examples differ from Figure 1 in that each pencil contains reducible conics that are pairs of parallel lines. Each asymptotic pencil that contains a pair of parallel lines contains all pairs of parallel lines having the same midline, a property that is visible in the examples in the second row. The midline itself is a double line and contains the midpoints of all bisectors not parallel to it.  }
\end{figure}


Bisector fields prove to be useful beyond the present setting, and by way of further motivation
we mention several other applications of these ideas that are worked out in sequels to this paper.   


\smallskip

(1) Continuing to work over a field of characteristic $\ne 2$, in \cite{OW5} we study the bisectors of a quadrilateral, where a line is said to  bisect a quadrilateral if it bisects both pairs of opposite sides. The collection of all bisectors of a quadrilateral admits a pairing given by an orthogonality relation from  an inner product induced by the quadrilateral itself. With this pairing, rather surprisingly, the bisectors form a bisector field---surprising because of the non-obvious symmetry here: although the lines were chosen to bisect the quadrilateral, every pair of bisectors paired with respect to the orthogonality relation not only bisects the quadrilateral but is in turn bisected by every bisector of the quadrilateral. Moreover, the midpoints of the bisectors lie on a conic, which turns out to be the nine-point conic of the quadrilateral, a conic that passes through nine distinguished points of the complete quadrilateral, and which, as 
 discussed in  \cite{Vac}, 
 was studied by Steiner and Beltrami in the mid-nineteenth century and then rediscovered by British geometers several decades later. Thus bisector fields help complete a picture of the nine-point conic by  giving an interpretation of the other points on the nine-point conic.  
 
\smallskip
 

(2) In \cite{OW6}, we describe the dual curve of bisectors and use this to  classify  bisector fields in terms of envelopes of tangent lines. Over the field of real numbers or complex numbers, we obtain a complete classification of all bisector fields by finding the envelope of a bisector field. Beltrami's rational tricuspidal  quartic thus reappears as one possible envelope. Over a finite field, the classification is more complicated and incomplete. In any case, bisector fields  give a new interpretation of the tangent lines of the Steiner deltoid, a curve and its tangent lines that appear in numerous contexts. 
 
\smallskip
 

(3) 
Bisector fields can be viewed as equivalence classes of quadrilaterals  or quadrangles. Call two quadrilaterals   entangled if they have the same bisector field. Entanglement is shown in \cite{OWSpider} to reduce to a simple, {\it a priori} asymmetric, criterion, namely that the sides of one quadrilateral bisect the sides of the other with respect to the same midpoints. 
A quadrilateral is entangled with another quadrilateral if and only if the pencils of conics their vertices define are entangled also, in the sense that each hyperbola in one pencil shares asymptotes with a hyperbola in the other pencil. In \cite{OWSpider}, we study these equivalence classes of quadrilaterals.  
 

\smallskip

(4) If $\B$ is a collection of pairs of lines in the plane and $p$ is a point, then the {\it diametric set} of $\B$ at $p$ is the set of points $q$ for which $q$ and the reflection of $q$ through $p$ lie on a pair in $\B$.  
  It is shown in  \cite{OWSpider}  that
 if the underlying field $\k$ has at least 23 elements, then  $\B$ is a bisector arrangement if and only if the diametric set at each point is a conic. Thus a collection of pairs of lines is a bisector arrangement if and only if  the pairs are ``a conic distance away'' from each point of the plane. We show also in  \cite{OWSpider} that when  $\k =  
  {\mathbb{R}}$, there is an interesting and rather tight connection between  the  diametric conics  of a bisector field  and the geometry of fields of ellipses used to describe the state of polarization of  
an electromagnetic wave in the theory of optics.

\smallskip

















     The outline of the paper is as follows.  
      In Section 2 we establish some terminology for pencils in our affine setting, including the 
     notion of a degeneration of a conic, a slight generalization of the affine concept of asymptotes of a hyperbola. Section~3 develops the notion of an asymptotic pencil, the set of degenerations of the conics in the pencil. It is shown in Corollary~\ref{generators2 cor}
       that regardless of the field, every asymptotic pencil contains a degenerate hyperbola (and at least two if the field has more than 3 elements). When $\k$ is an algebraically closed field, the asymptotic pencil can be viewed as a planar cubic in five-dimensional projective space (Proposition~\ref{cubic}). 
      
    A typical way a pencil of affine conics arises is as the set of conics through four points in general position. These four points are the vertices of a quadrilateral, and in Section~4 quadrilaterals are used to help describe technical  properties of the asymptotic pencil, such as whether two different degenerate hyperbolas in the pencil can share a line (see Lemma~\ref{trap}). 
    These properties are needed in proofs in the next sections.
    
    Bisectors are introduced in Section~5, first as bisectors of collections of conics, then as bisectors of pairs of lines (i.e., degenerate conics) and bisectors of quadrilaterals.  The main theorem of the section, Theorem~\ref{amazing}, implies that a bisector of a pair of conics bisects all the pairs of asymptotes of the hyperbolas in the pencil generated by the conics, as well as all hyperbolas that share their asymptotes with hyperbolas in the pencil. Aspects of this theorem are closely connected with Desargues' Involution Theorem, and in Corollary~\ref{des} we derive this theorem as a corollary. To conclude the article, 
Section 6  contains the proof of the main theorem, Theorem~\ref{main theorem 1}, which shows that bisector fields and asymptotic pencils are the same thing in different guises. 

It follows from  Proposition~\ref{quad exists} and Theorem~\ref{main theorem 1} that the lines in a nontrivial asymptotic pencil are the bisectors of a quadrilateral, and this idea is a starting point for the sequel~\cite{OW5}. 
This notion of a bisector of a quadrilateral is in distinction to the area and perimeter bisectors of 
Berele and Catoiu \cite{BC1,BC2}, but interestingly there appears in their context also an analytic analogue  of the tricuspidal envelope of bisectors that was discussed above and is studied in \cite{OW6}.

We used Maple to generate the figures in the article.

\section{Preliminaries}

 Since we work with arbitrary fields and outside the typical projective setting for pencils and conics, we first establish terminology. 
Throughout the paper $\k$ denotes a field of characteristic other than $2$.  
An {\it affine conic} is the zero locus in $\k^2$ of a quadratic   polynomial in $\k[X,Y]$ (quadratic, for short). 
Throughout the paper, since we work with affine conics only, we will usually drop the adjective ``affine.''
   A quadratic polynomial $f$ need not be determined by its zero set, but even 
 if $\k$ is finite, if there is more than one  zero of $f$, then the zero set uniquely determines~$f$ up to scalar multiple among quadratics \cite[16.1.4, p.~171]{Berger}. 
An {affine conic} is {\it reducible} if  it is the union of two lines over $\k$; equivalently, the quadratic that defines the conic is a product of linear polynomials. 

  A {\it hyperbola} over the field $\k$ is a   conic that has two distinct points at infinity.  
 A hyperbola has a {\it center},  
 a point of symmetry for the zero set of the hyperbola.  
  The {\it asymptotes} of a hyperbola are the two lines through the center of the hyperbola that meet the hyperbola at infinity. 
A {\it parabola} is a conic with one point at infinity and an {\it ellipse} a conic with no points at infinity. A pair of parallel lines or a double line is thus a degenerate parabola.  

For the sake of expediency, in this article we say two quadratics $f_1$ and $f_2$  are {\it independent} if  
  $ \deg(\alpha f_1+\beta f_2)=2 $
  for all 
   $\alpha,\beta \in \k$ where $\alpha$ and $\beta$ are not both $0$.     
       Thus two quadratics are independent if and only if their degree $2$ homogeneous parts are not scalar multiples of each other; if and only if over the algebraic closure of $\k$, the zero sets of the two quadratics do not share the same points at infinity. 
A {\it pencil of affine conics} is a set of conics that are the zero sets of the nonzero polynomials in $\k f_1 + \k f_2 = \{\alpha f_1 +  \beta f_2:\alpha, \beta \in \k\}
$ for some independent quadratics $f_1,f_2$. (Since $f_1$ and $f_2$ are independent, all the nonzero polynomials in $\k f_1 + \k f_2$ are quadratics.) Thus a pencil of affine conics is parameterized by the projective line over~$\k$.

If two conics in a pencil intersect in a point then all conics in the pencil intersect in this point.
%
Moreover, if as in Figures~1(a) and~2(b) two conics in the pencil intersect in four distinct points, no three of which are collinear, then the pencil is the set of conics through these four points, the {\it basepoints} of the pencil.  

 We need a slight generalization of   asymptotes:

 \begin{definition} 
 A {\it degeneration of a quadratic} $f$ is a  quadratic $g$ that is reducible over $\k$ and for which $f -g \in \k$. A {\it degeneration of an affine conic} is the zero set of a degeneration of the defining quadratic of the conic. 
 \end{definition} 
 
 Thus a degeneration of a conic is a pair of lines, possibly parallel and possibly a double line. 
 Every conic has a degeneration over the algebraic closure of $\k$, but this need not be the case over $\k$ itself, as is evident from the next proposition.


\begin{proposition}  \label{late lemma} 
 A hyperbola has a unique degeneration, namely its asymptotes. A degenerate parabola, i.e., a
 pair of parallel lines $\ell,\ell'$,  has as its degenerations the pairs of lines parallel to $\ell$ and $\ell'$ that share the same midline as the pair  $\ell,\ell'$.   Neither an ellipse nor a nondegenerate parabola  has a degeneration.
 \end{proposition}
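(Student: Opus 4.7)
The plan is to reduce everything to the factorization behavior of the degree-$2$ homogeneous part of the defining quadratic, since a degeneration $g$ of $f$ differs from $f$ by a constant and so has the same degree-$2$ part. Write $f = Q + L + c$ where $Q$ is the homogeneous quadratic part, $L = dX + eY$ is homogeneous of degree $1$, and $c \in \k$. Any reducible quadratic $g = L_1 L_2$ (with $L_i = a_iX + b_iY + c_i$ linear) has degree-$2$ part $(a_1X + b_1Y)(a_2X + b_2Y)$, so $Q$ itself must factor as a product of linear forms over $\k$. Thus whether $f$ admits a degeneration is controlled entirely by whether $Q$ factors over $\k$, which in turn is determined by the points at infinity of the conic. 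This immediately handles the ellipse case: an ellipse has no points at infinity, so $Q$ is irreducible over $\k$, and no $f + c'$ can be reducible.

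For a hyperbola, $Q = M_1 M_2$ with $M_1, M_2$ non-proportional homogeneous linear forms. I would absorb scalar factors into the $L_i$ and take $a_iX + b_iY = M_i$. Then $L_1 L_2 = M_1 M_2 + c_2 M_1 + c_1 M_2 + c_1 c_2$, and matching linear and constant parts with $Q + L + c + c'$ yields
\[
L = c_2 M_1 + c_1 M_2, \qquad c + c' = c_1 c_2.
\]
Since $M_1, M_2$ are linearly independent, the first equation has a unique solution $(c_1,c_2)$, which then determines $c'$ uniquely. So the degeneration is unique. I would identify it with the asymptotes by noting that translating coordinates to the center $(x_0,y_0)$ (the unique zero of $\nabla f$) turns $f$ into $Q(X-x_0,Y-y_0) + f(x_0,y_0)$, so the pair of lines through the center in the directions of $M_1 = 0$ and $M_2 = 0$ is the zero set of $f - f(x_0,y_0)$.

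For a parabola, $Q = M^2$ for some linear form $M$, so factoring forces $a_iX + b_iY$ both to be (scalar multiples of) $M$; taking both equal to $M$, we get $L_1 L_2 = M^2 + (c_1+c_2)M + c_1 c_2$. Matching coefficients requires $L$ to be a scalar multiple of $M$. If $f$ is a nondegenerate parabola, then by definition $L$ is not a multiple of $M$ (otherwise $f$ would already factor, up to a constant), and no degeneration exists. If $f$ is a pair of parallel lines $M + \beta_1 = 0$, $M + \beta_2 = 0$, then $L = (\beta_1+\beta_2)M$ and $c = \beta_1\beta_2$; any choice $c_1 + c_2 = \beta_1 + \beta_2$ yields a degeneration $(M+c_1)(M+c_2) = f + (c_1c_2 - \beta_1\beta_2)$, and these are precisely the pairs of lines parallel to $M = 0$ whose midline $M + (c_1+c_2)/2 = 0$ coincides with $M + (\beta_1+\beta_2)/2 = 0$.

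No step is genuinely hard; the only subtlety is to keep track of the scalar ambiguity in splitting $Q$ as a product of linear forms, which is why I normalize the leading parts $a_iX + b_iY$ to equal $M_1, M_2$ (or $M$) before matching coefficients. The characteristic-not-$2$ hypothesis enters implicitly through the midline description $M + (\beta_1+\beta_2)/2 = 0$ in the parallel-line case, and through the identification of the center of the hyperbola via $\nabla f = 0$, which gives a linear system whose determinant is (up to sign) the discriminant of $Q$, nonzero precisely because $Q$ factors into distinct linear forms.
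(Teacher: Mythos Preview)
Your proof is correct and takes a different route from the paper's. The paper argues case by case via affine change of coordinates: it normalizes a degenerate hyperbola to $XY$, a pair of parallel lines to $X(X-a)$, and a nondegenerate parabola to $Y-X^2$, then checks directly that $XY+\lambda$ and $Y-X^2+\lambda$ are irreducible for $\lambda\ne 0$. You instead work in general position and match the homogeneous pieces $Q$, $L$, $c$ of $f$ against those of a product $L_1L_2$; this is coordinate-free and makes the mechanism more transparent---in particular, uniqueness in the hyperbola case becomes the linear-algebra fact that $L=c_2M_1+c_1M_2$ has a unique solution $(c_1,c_2)$, rather than an appeal to irreducibility of $XY+\lambda$, and your identification of that unique degeneration with the asymptotes via translation to the critical point of $f$ is cleaner than the paper's implicit reliance on the normal form.

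One small wrinkle worth tightening: your parenthetical ``otherwise $f$ would already factor, up to a constant'' and the paper's reduction to $Y-X^2$ both quietly assume that a $\k$-irreducible parabola must have $L$ not proportional to $M$. Over fields where not every element is a square this can fail (e.g.\ $f=X^2+1$ over $\mathbb{R}$ has $Q=X^2$, $L=0$, is irreducible, yet admits the degeneration $X^2$). What you actually prove is the implication ``$L$ proportional to $M$ $\Rightarrow$ $f$ has a degeneration,'' whose contrapositive is what you need; the only conics this leaves out are those with empty affine zero set, which one may reasonably exclude from the term ``nondegenerate parabola.'' The paper's proof has exactly the same gap, so you are in good company.
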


 
 \begin{proof} 
 Suppose a conic has a degeneration into a pair of lines defined by linear polynomials $g_1$ and $g_2$, and suppose 
 $g_1g_2 -h_1h_2  \in \k$, where the $h_i$ are linear polynomials in $\k[X,Y]$. If $g_1$ and $g_2$ define lines that are not parallel, then after a change of variables we can assume  $g_1(X,Y) =X$ and $g_2(X,Y) = Y$. 
 Since $XY+\lambda$ is irreducible for all $\lambda \ne 0$, 
 the only degeneration of $g_1g_2$ is itself. 
 It follows that the
  pair of asymptotes of a hyperbola is the unique degeneration of the hyperbola. 
%
 %

If instead $g_1$ and $g_2$ define parallel lines, then   we can assume   $g_1(X,Y) = X$ and $g_2(X,Y)=X-a.$ 
The degenerations of $g_1g_2$ are the conics of the form $(X-r)(X-s)$, where $r,s \in \k$ and $a = r+s$. The midline of the two lines $X=r$ and $X=s$ is $X=a/2$, which is the midline of the lines $g_1$ and $g_2$. 
%
This proves the second assertion of the proposition. 

That an ellipse does not have a degeneration is clear since a conic and its degeneration share the same points at infinity and an ellipse does not meet the line at infinity. 
To see that a nondegenerate parabola $f$ does not have a degeneration, use an affine transformation to reduce to the case that $f(X,Y) = Y-X^2$. Then $f+\lambda$ is an irreducible polynomial for all $\lambda \in \k$, and so $f$ does not have a degeneration.   \end{proof}

    \section{Asymptotic pencils}
    
    As noted in the last section, we will typically drop the ``affine'' in ``affine conic'' since we focus in this article only on the affine case. 
    Our main object of interest is that of an asymptotic pencil.
    
    \begin{definition} 
     A set $\A$ of reducible conics (i.e., pairs of lines) is an {\it asymptotic pencil} 
if there is a pencil of conics for which $\A$ consists of the pairs of asymptotes of the hyperbolas in the pencil and the pairs of parallel lines that share a midline with a pair of parallel lines in the pencil, if any such pairs exist.  
\end{definition}

 The next proposition, which is an immediate consequence of Proposition~\ref{late lemma}, gives a simple   interpretation of an asymptotic pencil that will be useful in proofs.

\begin{proposition} A collection  of pairs of lines is an 
 asymptotic pencil if and only if
it is the set of degenerations of conics in a pencil of conics. 
 \end{proposition}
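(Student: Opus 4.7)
The plan is to reduce the proposition to Proposition~\ref{late lemma} by matching the two descriptions type-by-type. First I would unpack the two set-theoretic definitions side by side. If $\A$ is an asymptotic pencil arising from a pencil $\mathcal{P}$ of conics, then by definition
\[ \A = \{\text{pairs of asymptotes of hyperbolas in }\mathcal{P}\}\cup\{\text{pairs of parallel lines sharing a midline with a degenerate parabola in }\mathcal{P}\}. \]
On the other hand, the set of degenerations of conics in $\mathcal{P}$ is the union over each $C \in \mathcal{P}$ of the degenerations of $C$ in the sense of the preceding definition. So the proposition amounts to showing these two unions coincide.

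Next I would invoke Proposition~\ref{late lemma}, which classifies all degenerations according to the type of conic. A hyperbola has as its unique degeneration its pair of asymptotes; a degenerate parabola (a pair of parallel lines) has as degenerations exactly the pairs of parallel lines sharing its midline; and neither an ellipse nor a nondegenerate parabola has any degeneration. Plugging these three cases into the union $\bigcup_{C \in \mathcal{P}}\{\text{degenerations of }C\}$ produces exactly the two clauses in the definition of an asymptotic pencil, and plugging the asymptotic-pencil description into the same classification recovers the set of all degenerations in $\mathcal{P}$. This yields both implications simultaneously.

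The one point I would pause on, to make sure no case is mishandled, is the status of a reducible conic that already appears as a member of $\mathcal{P}$. A pair of intersecting lines is a (degenerate) hyperbola in the paper's terminology, and Proposition~\ref{late lemma} assigns it its own asymptotes as the unique degeneration, namely the pair itself; so such a conic contributes itself to both sides. A pair of parallel lines in $\mathcal{P}$ is a degenerate parabola, and its degenerations, by Proposition~\ref{late lemma}, are precisely the pairs of parallel lines sharing its midline, which again matches clause (b) of the asymptotic-pencil definition. With these boundary cases verified, the equality of the two sets is immediate, so the argument is essentially just bookkeeping on top of Proposition~\ref{late lemma}; the only potential obstacle is ensuring the definitions are parsed with the paper's convention that hyperbola and parabola include their degenerate instances.
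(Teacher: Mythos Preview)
Your proposal is correct and follows exactly the approach the paper intends: the paper states the proposition as ``an immediate consequence of Proposition~\ref{late lemma}'' without further proof, and your case-by-case unpacking via that proposition is precisely the bookkeeping that justifies this claim.
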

 

   Figures 1 and 2 give examples of asymptotic pencils.  By definition, independent quadratics 
generate pencils of conics, and the next lemma shows that any choice of two independent quadratics in the pencil will do.

  \begin{lemma} \label{generators} Let $f_1,f_2$ be independent quadratics. 
  Any two pairs of independent quadratics in
  $ \k f_1 + \k f_2$ generate the same pencil, and any two 
pairs of independent quadratics in
    $
  \k f_1 + \k f_2 + \k$ generate the same asymptotic pencil.
  
\end{lemma}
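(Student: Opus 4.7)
The plan is to reduce both assertions to linear algebra in $\k[X,Y]$, using the paper's definition of independence — that no nontrivial $\k$-linear combination drops below degree~$2$ — to translate statements about pencils into dimension statements.

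For the first assertion, independence of $f_1,f_2$ forces them to be linearly independent, since any vanishing combination would have degree $-\infty$, not $2$. Hence $\dim_\k(\k f_1+\k f_2)=2$. Any independent pair $g_1,g_2\in \k f_1+\k f_2$ is, by the same argument, linearly independent and therefore a basis of this two-dimensional space. So $\k g_1+\k g_2=\k f_1+\k f_2$, and the two pencils coincide.

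For the second assertion I would first record a reformulation: combining the definition of degeneration with the proposition immediately preceding the lemma (characterizing asymptotic pencils as sets of degenerations of conics in a pencil), the asymptotic pencil of $\k f_1+\k f_2$ is precisely the set of zero loci of the reducible quadratics lying in $\k f_1+\k f_2+\k$. Indeed, a reducible quadratic $\alpha f_1+\beta f_2+\gamma$ with $(\alpha,\beta)\neq(0,0)$ is a degeneration of the conic $\alpha f_1+\beta f_2$, and conversely every degeneration of a conic in the pencil has this form. The lemma then reduces to showing $\k g_1+\k g_2+\k=\k f_1+\k f_2+\k$ whenever $g_1,g_2$ is an independent pair in $\k f_1+\k f_2+\k$.

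To prove this equality I would show that both sides are three-dimensional, which combined with the inclusion~$\subseteq$ forces equality. On the right, $f_1,f_2,1$ are linearly independent: a vanishing combination $\alpha f_1+\beta f_2+\gamma=0$ presents $\alpha f_1+\beta f_2$ as the constant $-\gamma$, so independence of $f_1,f_2$ gives $\alpha=\beta=0$ and then $\gamma=0$. The identical argument applied to $g_1,g_2$ shows $g_1,g_2,1$ are linearly independent, making $\k g_1+\k g_2+\k$ three-dimensional as well. I do not expect a substantive obstacle; the only subtlety worth flagging is that ``independent'' in this paper is strictly stronger than linear independence once constants are allowed in the ambient space, so both implications used above — independent pair implies linearly independent, and an independent pair together with $1$ remains linearly independent — rely specifically on the ``no degree drop'' condition rather than on mere linear independence.
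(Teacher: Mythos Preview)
Your proof is correct and follows essentially the same approach as the paper: both arguments establish the equality $\k g_1+\k g_2+\k=\k f_1+\k f_2+\k$ (resp.\ without the $+\k$) via elementary linear algebra, with the paper inverting an explicit $2\times 2$ coefficient matrix while you argue by dimension count. Your added remark that the asymptotic pencil is precisely the set of reducible quadratics in $\k f_1+\k f_2+\k$ makes explicit the step the paper leaves implicit when concluding that equal nets yield equal asymptotic pencils.
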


\begin{proof} The argument is routine.  Let $g_1,g_2$ be  independent quadratics in 
$
  \k f_1 + \k f_2 + \k$.
There are $a_{ij} \in \k$ such that 
$g_i = a_{i1}f_1+a_{i2}f_2 + a_{i3}$ for $i = 1,2$. 
If $a_{11}a_{22} -a_{12}a_{21} =0$, then there are $\alpha,\beta \in \k$, not both $0$, such that $\alpha a_{11} = \beta a_{21}$ and $\alpha a_{21} = \beta a_{22}$, and hence $\alpha g_1 -\alpha a_{31}   = \beta g_2- \beta a_{31} $, contradicting the independence of the pair $g_1,g_2$. Thus $a_{11}a_{22} -a_{12}a_{21} \ne 0$, from which it follows that there are $b_{ij} \in \k$ such that 
$f_i = b_{i1}g_1+b_{i2}g_2 + b_{i3}$ for $i = 1,2$.
%
This implies $ \k f_1 + \k f_2 + \k  = 
 \k g_1 + \k g_2 + \k$, and hence the asymptotic pencil generated  by $f_1,f_2$ is the same at that generated by $g_1,g_2$.   Choosing instead $g_1$ and $g_2$  in $\k f_1 + \k f_2$ and $a_{13}   = a_{23} =0$ yields  $\k f_1 + \k f_2  =  \k g_1 + \k g_2.$ 
\end{proof}

The next lemma implies that any two distinct conics in a pencil of affine conics  are defined by independent quadratics.

\begin{lemma} 
\label{singular}
Let $f_1,f_2$ be independent quadratics.  
 
  \begin{itemize}
\item[$(1)$] 
Two  quadratics $g_1,g_2 \in \k f_1 + \k f_2$ are dependent if and only if one of  $g_1,g_2 $ is a scalar multiple of the other.


\item[$(2)$]  Two reducible quadratics in $\k f_1 + \k f_2+ \k$ are dependent if and only if 
they 
define 
   the same pair of non-parallel lines or they 
 define  pairs of parallel lines having the same midline. 
   \end{itemize}
   \end{lemma}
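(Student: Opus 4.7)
My plan is to translate both statements into linear algebra in the vector space $\k f_1+\k f_2 + \k$ and then to invoke Proposition~\ref{late lemma}. The crucial preliminary observation is that since $f_1,f_2$ are independent quadratics, their degree-$2$ homogeneous parts $\bar f_1,\bar f_2$ are linearly independent as quadratic forms; consequently, for any $g = a_1 f_1 + a_2 f_2 + a_3$, $\deg g < 2$ if and only if $a_1 = a_2 = 0$.

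For part (1), I will write $g_i = c_{i1}f_1 + c_{i2} f_2$ and compute $\alpha g_1 + \beta g_2$ explicitly. The observation above forces this polynomial to have degree less than $2$ precisely when $\alpha g_1 + \beta g_2 = 0$. Dependence in the paper's sense therefore coincides with ordinary linear dependence in $\k f_1+\k f_2$, which, since both $g_i$ are nonzero, is the same as saying one is a (necessarily nonzero) scalar multiple of the other.

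For the forward direction of part~(2), I will apply the same reduction to $g_i = a_{i1}f_1 + a_{i2}f_2 + a_{i3}$. This shows that $\alpha g_1 + \beta g_2$ has degree less than $2$ if and only if it equals the constant $\alpha a_{13} + \beta a_{23}$; and since a nonzero scalar multiple of a quadratic has degree $2$, neither $\alpha$ nor $\beta$ can vanish. Dependence thus reduces to the single equation $g_2 - c g_1 \in \k$ for some $c \in \k$ with $c \neq 0$, i.e., $g_2$ is a degeneration of the reducible conic $c g_1$. Since $cg_1$ and $g_1$ define the same set of lines, I can then quote Proposition~\ref{late lemma}: if $g_1$ is a pair of non-parallel lines its only degeneration is itself, forcing $g_2 = cg_1$ and the same pair of non-parallel lines; if $g_1$ is a pair of parallel lines (or a double line), $g_2$ must be a pair of parallel lines sharing the midline of $g_1$.

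The reverse direction of (2) will run the same chain backward via Proposition~\ref{late lemma}: two reducible quadratics with the same non-parallel zero set are scalar multiples, while two pairs of parallel lines with a common midline can, after a single rescaling, be arranged so that they differ only by a constant; in either case $\alpha g_1 + \beta g_2 \in \k$ for some nontrivial $(\alpha,\beta)$, giving dependence. The step requiring the most care is the reliance of the forward direction on the ambient space $\k f_1+\k f_2 +\k$: without it, dependent reducible pairs such as $g_1 = XY$ and $g_2 = (X+1)Y$ would violate the claim. The proof must therefore explicitly use that any element of $\k f_1 + \k f_2 + \k$ whose degree-$2$ part vanishes lies in $\k$, not merely in $\k[X,Y]_{\leq 1}$.
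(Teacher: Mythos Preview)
Your proposal is correct and follows essentially the same approach as the paper: both reduce dependence to the statement that $g_2 - c g_1 \in \k$ for some nonzero scalar $c$ (the paper phrases this as applying part~(1) to $g_i + \lambda_i \in \k f_1 + \k f_2$, you phrase it directly), and both then invoke Proposition~\ref{late lemma} to read off the geometric conclusion. Your closing remark about why the ambient space $\k f_1 + \k f_2 + \k$ matters is a useful clarification not made explicit in the paper.
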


   \begin{proof}         For statement (1),  write $g_i=\alpha_i f_1 + \beta_i f_2$, where $\alpha_i,\beta_i \in \k$ are not both $0$.    Suppose $g_1$ and $g_2$ are dependent. Then  there are $\alpha,\beta \in \k$, not both $0$, such that $\deg(\alpha g_1 + \beta g_2)<2$. Since $f_1$ and $f_2$ are independent and
   $$\alpha g_1 + \beta g_2 = (\alpha \alpha_1 + \beta \alpha_2)f_1 +(\alpha \beta_1+\beta \beta_2)f_2,$$ it follows that $\alpha \alpha_1 + \beta \alpha_2 = \alpha \beta_1+\beta \beta_2=0$. Since at least one of $\alpha$ and $\beta$ is not zero,  $\alpha_1\beta_2 - \alpha_2\beta_1 =0$, and hence 
    the vectors $(\alpha_1,\beta_1)$ and $(\alpha_2,\beta_2)$ in $\k^2$ are linearly dependent. Consequently, $g_1 $ and $g_2$ are scalar multiples of each other. The converse of (1) is clear.

  To verify (2), suppose $g_1$ and $g_2$ are  reducible quadratics in $\k f_1 + \k f_2+\k.$  
There are $\lambda_1,\lambda_2 \in \k$ such that $g_1+\lambda_1,g_2 +\lambda_2 \in \k f_1 + \k f_2$.   Assume $g_1$ and $g_2$ are dependent.  Then so are $g_1 + \lambda_1$ and $g_2+\lambda_2$, and 
   by (1),   there is $0\ne \gamma \in \k$ such that $\gamma(g_1+\lambda_1) = g_2+\lambda_2$, so that $\gamma g_1$ is a degeneration  of $g_2$.  
    By Proposition~\ref{late lemma}, no degenerate hyperbola is the degeneration of a different degenerate hyperbola, so 
    if $g_1$ and $g_2$ are hyperbolas, then   $g_1$ and 
   $g_2$  define the same pairs of non-parallel lines.  If instead $g_1$ and $g_2$ define pairs of parallel lines, then by Proposition~\ref{late lemma}, as degenerations of each other, $g_1$ and $g_2$ must have zero sets that are pairs of parallel lines that share the same midline. 
  The converse of (2) also follows from Proposition~\ref{late lemma}. 
 %
   \end{proof}

In order to show that asymptotic pencils are nonempty, 
it suffices to prove  that pencils of affine conics contain at least one hyperbola. 
%

\begin{proposition} \label{generators2}
Every pencil of affine conics contains at least one hyperbola, and  
if $|\k|>3$ the pencil contains at least two  hyperbolas. 
\end{proposition}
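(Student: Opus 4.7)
The plan is to pass from the pencil to the discriminant of its degree-$2$ part. Writing the degree-$2$ homogeneous part of $f_i$ as $q_i = a_iX^2+b_iXY+c_iY^2$, independence of $f_1,f_2$ is equivalent to linear independence of $q_1,q_2$, and the conic defined by $\alpha f_1+\beta f_2$ is a hyperbola exactly when $\alpha q_1+\beta q_2$ splits as a product of two linearly independent linear forms over $\k$, equivalently when its discriminant $D(\alpha,\beta):=b(\alpha,\beta)^2-4a(\alpha,\beta)c(\alpha,\beta)$ lies in $(\k^*)^2$; here $a(\alpha,\beta)=a_1\alpha+a_2\beta$, $b(\alpha,\beta)=b_1\alpha+b_2\beta$, and $c(\alpha,\beta)=c_1\alpha+c_2\beta$ are the linear forms in $(\alpha,\beta)$ recording the coefficients of $X^2$, $XY$, $Y^2$ in $\alpha q_1+\beta q_2$. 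The task reduces to showing that $D$ represents a nonzero square, and for the second part that it does so on at least two projective classes in $\P^1(\k)$.

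For the first assertion the key trick is that $D=b^2-4ac$ becomes a perfect square as soon as one of the linear forms $a$ or $c$ vanishes. Assuming $a\not\equiv 0$, take $(\alpha_0,\beta_0)$ in its one-dimensional kernel; then $D(\alpha_0,\beta_0)=b(\alpha_0,\beta_0)^2$ is a nonzero square unless $b$ is itself a $\k$-multiple of $a$. The case analysis proceeds symmetrically: when $b$ fails the nonvanishing test at the zero loci of both $a$ and $c$, either $a$, $b$, $c$ are all nonzero scalar multiples of a single linear form---forcing the coefficient vectors $(a_i,b_i,c_i)$ to be proportional and contradicting independence of $q_1,q_2$---or $b\equiv 0$, in which case $a$ and $c$ must themselves be independent linear forms (otherwise $q_1\propto q_2$), and the change of variables $(u,v)=(a(\alpha,\beta),c(\alpha,\beta))$ reduces $D=-4ac$ to $-4uv$, which equals the nonzero square $4$ at $(u,v)=(1,-1)$. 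The edge cases $a\equiv 0$ or $c\equiv 0$ are handled directly: for instance if $a\equiv 0$ then each $q_i=Y(b_iX+c_iY)$ is already reducible, and independence of $q_1,q_2$ forces some $b_i\ne 0$, so that $q_i$ itself is a hyperbola.

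For the second assertion I view $D(\alpha,\beta)$ as a nonzero binary quadratic form in $(\alpha,\beta)$ and split by its roots in $\P^1(\k)$. (i) If $D=\gamma L_1L_2$ has two distinct $\k$-rational roots, the ratio $L_1/L_2$ runs bijectively through $\k^*$ on the non-root locus and $D=\gamma(L_1/L_2)L_2^2$ is a nonzero square on the coset of $(\k^*)^2$ determined by $\gamma^{-1}$, of projective cardinality $(|\k|-1)/2$ when $\k$ is finite. (ii) If $D$ is anisotropic, a standard count shows $D$ attains each nonzero value of $\k$ the same number of times, yielding $(|\k|+1)/2$ projective points where $D$ is a nonzero square. (iii) If $D=\gamma L^2$ has a double root, then the first assertion forces $\gamma\in(\k^*)^2$, so all $|\k|$ non-root projective points yield hyperbolas. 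In every case the count is at least $2$ once $|\k|>3$, and the analogous set-theoretic arguments show that each case produces infinitely many hyperbolas over an infinite field.

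The main technical obstacle is the bookkeeping in the first assertion, specifically the ``all proportional'' configuration where the discriminant trick has to be replaced by a direct contradiction with independence of $q_1,q_2$; a secondary subtlety in the second assertion is that case (iii) would collapse to zero hyperbolas if $\gamma$ were a non-square, so the existence result of the first assertion is essential to force $\gamma\in(\k^*)^2$.
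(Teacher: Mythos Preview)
Your proof is correct and takes a genuinely different route from the paper's. The paper proceeds by an explicit row-echelon reduction of the coefficient matrix of the degree-$2$ homogeneous parts, normalizing to two concrete cases and then writing down specific hyperbolas by hand (e.g., $f_1-f_2$ and $f_1-t^2f_2$ in one case, and an explicit $f_1+\beta f_2$ with a carefully chosen $\beta$ in the other). Your argument instead packages the problem as the representability of nonzero squares by the binary quadratic form $D(\alpha,\beta)=b^2-4ac$, and then invokes the trichotomy split/anisotropic/double-root together with standard counting over finite fields. What your approach buys is a cleaner conceptual picture and sharper quantitative information---you get exact counts $(|\k|-1)/2$, $(|\k|+1)/2$, or $|\k|$ for the number of hyperbolas in the pencil---at the cost of importing the representation count for anisotropic binary forms over $\mathbb{F}_q$, which the paper's bare-hands argument avoids. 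The paper's approach is more self-contained and yields explicit hyperbolas, which is sometimes useful downstream. Your logical bootstrap in case~(iii), using the existence result from the first part to force $\gamma\in(\k^*)^2$, is a nice touch. The only place your write-up is thin is the infinite-field endgame in case~(ii): ``the analogous set-theoretic arguments'' deserves one more sentence---e.g., observe that $D(\alpha,\beta)=\gamma^2$ cuts out a smooth projective conic with a $\k$-point, hence is $\k$-rational, and projection to $[\alpha:\beta]$ is two-to-one with infinite image.
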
 

\begin{proof} 
Let
 $f_1,f_2$ be  independent quadratics. For each $i =1,2$, write $$f_i(X,Y) = a_iX^2+b_iXY+c_iY^2+ d_iX + e_iY+g_i,$$
  where the coefficients of $f_i$ are from $\k$. 
  Since $f_1$ and $f_2$ are independent, the vectors $(a_1,b_1,c_1)$ and $(a_2,b_2,c_2)$ in $\k^3$ are linearly independent. 
   The matrix having these two vectors as its rows has rank 2, and 
 so  its reduced row echelon form is 
 one of the following matrices,  where $b,c,c' \in \k$.
 \begin{center}
 $\begin{bmatrix}
 1 & 0 & c \\
 0 & 1 & c' \\
 \end{bmatrix} 
 \quad
 \begin{bmatrix}
 1 & b & 0 \\
 0 & 0 & 1 \\
 \end{bmatrix} 
 \quad
 \begin{bmatrix}
 0 & 1& 0 \\
 0 & 0 & 1 \\
 \end{bmatrix}. $
 \end{center}  
By switching  $f_1$ with $f_2$ and/or interchanging the variables $X$ and $Y$,  the only  cases that need to be considered are the first matrix or the second matrix with $b=0$. The latter case allows  a reduction to  
\begin{eqnarray*} 
f_1(X,Y) &= & X^2 + d_1X+e_1Y+g_1 \\
f_2(X,Y) & = &  Y^2+d_2X+e_2Y+g_1,
\end{eqnarray*}
and so $f_1 -f_2$ is a hyperbola in $\k f_1 + \k f_2$ (it has two points at infinity) and, if $|\k|>3$, then  we can choose $t \in \k$ such that $t^2 \ne 1$, so that 
$f_1-t^2f_2$ is a  hyperbola in $\k f_1 + \k f_2$ that is independent from $f_1-f_2$.

In the former case, that of the first matrix in reduced row echelon form, 
%
 we can  reduce to the situation in which  
the quadratics $f_1$ and $f_2$ are  
 \begin{eqnarray*} \label{case 1a} f_1(X,Y) &= & X^2 + c_1Y^2+ d_1X+e_1Y+g_1 \\ \label{case 1b}
  f_2(X,Y) &= & 
  XY+c_2Y^2+ d_2X+e_2Y+g_2.
  \end{eqnarray*}
  First observe that
$f_2$ is a hyperbola with points at infinity $[-c_2:1:0]$ and $[1:0:0]$, so the pencil $\k f_1 + \k f_2$ contains at least one hyperbola. 
 
 Now suppose 
   $|\k| > 3$. There is $0 \ne r \in \k$ such that $r \ne -c_2$ and $r^2 +2c_2r -c_1 \ne 0$.  Let $$\beta = -\frac{r^2+c_1}{c_2+r} \:\: {\mbox{ and }} \:\: s=-\beta-r.$$ A  calculation shows the degree two homogeneous component of $f_1+\beta f_2 $ is $$ X^2+ c_1Y^2 +\beta (XY+c_2Y^2) = (X-rY)(X-sY)$$
Also, $r \ne s$ since by the choice of $r$,  $$r-s =
2r+\beta =
 2r -\frac{r^2+c_1}{c_2+r} = 
\frac{r^2+2c_2r-c_1 }{r+c_2} \ne 0.$$ Therefore, 
$f_1+\beta f_2$ is a hyperbola whose  distinct points at infinity are $[r:1:0]$ and $[s:1:0]$,  and so $f_1+\beta f_2$ and $f_2$ are independent  hyperbolas in $\k f_1 + \k f_2$ since they do not share the same points at infinity.   
%
 %
%
%
%
\end{proof}

\begin{corollary} \label{generators2 cor} Every asymptotic pencil contains a degenerate hyperbola, and if 
 $|\k| > 3$ 
 every asymptotic pencil contains at least two degenerate  hyperbolas.\end{corollary}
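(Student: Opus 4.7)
The plan is to deduce this directly from Proposition~\ref{generators2} together with Proposition~\ref{late lemma} and Lemma~\ref{singular}. By definition, an asymptotic pencil $\A$ is the set of degenerations of the conics in some pencil $\mathcal{P} = \{\alpha f_1 + \beta f_2 : (\alpha,\beta) \ne (0,0)\}$ of affine conics. Proposition~\ref{generators2} furnishes at least one hyperbola $h \in \mathcal{P}$, and by Proposition~\ref{late lemma} the (unique) degeneration of $h$ is its pair of asymptotes, which is a degenerate hyperbola. This degenerate hyperbola lies in $\A$, proving the first claim.

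For the second claim, suppose $|\k| > 3$. Proposition~\ref{generators2} produces two \emph{independent} hyperbolas $h_1, h_2 \in \mathcal{P}$. Let $g_1$ and $g_2$ be their respective degenerations, so that $g_i$ is a pair of intersecting lines (a degenerate hyperbola) and $h_i - g_i \in \k$ for $i=1,2$. Both $g_1$ and $g_2$ belong to $\A$, and both lie in $\k f_1 + \k f_2 + \k$.

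The main step is to verify that $g_1$ and $g_2$ are distinct as pairs of lines. Suppose otherwise; then by Lemma~\ref{singular}(2) the reducible quadratics $g_1$ and $g_2$ are dependent, so $g_1 = \gamma g_2$ for some $0 \ne \gamma \in \k$. Writing $h_i = g_i + \lambda_i$ with $\lambda_i \in \k$, we obtain
\[
h_1 - \gamma h_2 \;=\; (g_1 - \gamma g_2) + (\lambda_1 - \gamma \lambda_2) \;=\; \lambda_1 - \gamma\lambda_2 \in \k,
\]
so $\deg(h_1 - \gamma h_2) < 2$. This contradicts the independence of $h_1$ and $h_2$. Hence $g_1$ and $g_2$ are distinct degenerate hyperbolas in $\A$, completing the proof.

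The only real obstacle is confirming that distinct independent hyperbolas in $\mathcal{P}$ produce distinct pairs of asymptotes in $\A$, and this is precisely where Lemma~\ref{singular}(2) is essential: without it, one might worry that two different hyperbolas could share their asymptote pair as a set, but the lemma rules this out by translating the set-theoretic coincidence into a linear dependence that contradicts independence.
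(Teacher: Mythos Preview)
Your proof is correct and follows essentially the same route as the paper's: invoke Proposition~\ref{generators2} to get (independent) hyperbolas in the pencil, pass to their asymptote pairs via Proposition~\ref{late lemma}, and argue that if two independent hyperbolas had the same asymptote pair then a nontrivial $\k$-combination would drop below degree~$2$, contradicting independence. One small remark: your appeal to Lemma~\ref{singular}(2) is a detour, since that lemma only yields dependence of $g_1,g_2$, not directly $g_1=\gamma g_2$; the conclusion $g_1=\gamma g_2$ follows more simply from the fact (noted in Section~2) that a quadratic with more than one zero is determined up to scalar by its zero set, which is exactly how the paper argues via Proposition~\ref{late lemma}.
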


\begin{proof} By Proposition~\ref{generators2}, every pencil of affine conics contains a hyperbola, and hence every asymptotic pencil contains   a degenerate hyperbola.  Suppose $|\k|>3$.
By Proposition~\ref{generators2}, each  pencil of affine conics contains at least two  independent hyperbolas. If the asymptotes of these two hyperbolas are the same pairs of lines, 
then  by Proposition~\ref{late lemma}, these asymptotes are degenerations of the hyperbolas, and so the equations for the hyperbolas differ by a constant, a contradiction to the fact that the hyperbolas are independent. Thus the asymptotic pencil contains at least two degenerate hyperbolas. \end{proof}

Proposition~\ref{generators2} and Corollary~\ref{generators2 cor} are not true if $\k$ has only 3 elements:

\begin{example} Let $\k$ be the field with 3 elements. The pencil generated by the parabola $X^2+Y$ and the degenerate hyperbola $XY+Y^2$ has, up to scalar multiple, only two other conics, the parabola $X^2+Y+XY+Y^2=(Y+2X)^2+Y$ and the   ellipse $X^2+Y+2XY+2Y^2$, whose zero set consists of the four points $(0,0),(0,1),(1,1),(1,2)$. By Proposition~\ref{late lemma}, the asymptotic pencil of $X^2+Y$ and  $XY+Y^2$ contains only one conic up to scalar multiple,  the degenerate  hyperbola $XY+Y^2$. 
\end{example}

Viewing two polynomials in $\k[X,Y]$ as {equivalent} if one is a nonzero scalar multiple of the other,  
the space of equivalence classes of  polynomials of degree at most $2$ is  five-dimensional projective space $\P^5(\k)$, where an equivalence class of a polynomial $aX^2+bXY+cY^2+dX+eY+f$ is represented by a point $[a:b:c:d:e:f] \in \P^5(\k)$.  Pencils of affine conics correspond to  lines under this representation. Asymptotic pencils are more complicated.

\begin{proposition} \label{cubic} If $\k$ is an algebraically closed field, then an asymptotic pencil is a planar cubic in  $\P^5(\k)$. 
\end{proposition}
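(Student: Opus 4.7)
The plan is to realize the asymptotic pencil as the intersection of two classical loci in $\P^5(\k)$: a projective plane coming from the three--dimensional subspace $\k f_1 + \k f_2 + \k$, and the universal cubic hypersurface parameterizing reducible conics. To each quadratic $aX^2+bXY+cY^2+dX+eY+f$ I associate the symmetric $3\times 3$ matrix
$$M \;=\; \begin{pmatrix} a & b/2 & d/2 \\ b/2 & c & e/2 \\ d/2 & e/2 & f \end{pmatrix}$$
obtained from the homogenization $aX^2+bXY+cY^2+dXZ+eYZ+fZ^2$ (this uses $\mathrm{char}\,\k\ne 2$). Over an algebraically closed field, a conic is reducible exactly when $\det(M)=0$, and $\det(M)$ is a homogeneous cubic in $[a\!:\!b\!:\!c\!:\!d\!:\!e\!:\!f]$. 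Hence the reducible conics form a cubic hypersurface $V\subset\P^5(\k)$.

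Next I would verify that $f_1,f_2,1$ are linearly independent in $\k[X,Y]$: a nontrivial relation $\alpha f_1+\beta f_2=\gamma\in\k$ with $(\alpha,\beta)\ne(0,0)$ would force $\deg(\alpha f_1+\beta f_2)<2$, contradicting the independence of $f_1,f_2$. Thus $\k f_1+\k f_2+\k$ is three-dimensional, and its projectivization $\Pi\subset\P^5(\k)$ is a projective plane. The proposition immediately following the definition of asymptotic pencil identifies the asymptotic pencil with the set of reducible quadratics in $\k f_1+\k f_2+\k$; under the embedding into $\P^5(\k)$, this set is exactly $V\cap\Pi$ (the representation of a reducible conic by its defining quadratic up to scalar is unambiguous by the remark in Section~2 on zero sets uniquely determining quadratics).

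To conclude that $V\cap\Pi$ is a cubic \emph{curve} rather than all of $\Pi$, I must show $\Pi\not\subset V$, i.e., that $\k f_1+\k f_2+\k$ contains an irreducible quadratic. This is provided by Proposition~\ref{generators2}, which guarantees that the pencil $\k f_1+\k f_2$ already contains a hyperbola, necessarily an irreducible quadratic and hence a point of $\Pi\setminus V$. Therefore $\det(M)$ restricts to a nonzero homogeneous cubic form on $\Pi\cong\P^2(\k)$, whose vanishing locus is a planar cubic, namely the asymptotic pencil.

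The only real obstacle beyond linear algebra is making sure the dictionary between reducible quadratics in $\k f_1+\k f_2+\k$ and elements of the asymptotic pencil is a true equality (rather than just a containment); this is precisely the content of the proposition in Section~3 characterizing asymptotic pencils as the sets of degenerations of conics in a pencil, which was itself an immediate consequence of Proposition~\ref{late lemma}. The rank-one case of $M$, corresponding to double lines, is covered by the same dictionary.
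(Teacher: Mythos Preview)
Your framework is essentially the paper's own: both identify the asymptotic pencil with the vanishing, inside the plane $\Pi\cong\P^2(\k)$ spanned by $f_1,f_2,1$, of the determinantal cubic $\det M$. The paper then shows this restricted cubic is not identically zero by an explicit coefficient chase (proving the coefficient $\Phi(U,V)$ of the $T$-term does not vanish), whereas you attempt the same by exhibiting an irreducible conic in $\k f_1+\k f_2+\k$ via Proposition~\ref{generators2}.

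There is, however, a gap in that step. In this paper a \emph{hyperbola} is any conic with two distinct points at infinity; a pair of non-parallel lines (a ``degenerate hyperbola'') qualifies and is reducible. Proposition~\ref{generators2} does not assert irreducibility, and its proof only checks the points at infinity of the conics it constructs. So the phrase ``necessarily an irreducible quadratic'' is false as written, and you have not yet produced a point of $\Pi\setminus V$. The repair is short: if the hyperbola $h$ supplied by Proposition~\ref{generators2} happens to be reducible, then by Proposition~\ref{late lemma} a hyperbola has a \emph{unique} degeneration (its asymptotes, here $h$ itself), so $h+\lambda$ is irreducible for every nonzero $\lambda\in\k$, and $h+\lambda\in\k f_1+\k f_2+\k$. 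With that patch your argument goes through and gives a clean alternative to the paper's coefficient computation.
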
 

\begin{proof} 
 Let $f_1,f_2$ be  independent quadratics. 
 We show the asymptotic pencil generated by $f_1,f_2$ is a planar cubic. 
 For each $i =1,2$, write $$f_i(X,Y) = a_iX^2+2b_iXY+c_iY^2+ 2d_iX + 2e_iY+g_i,$$
  where the coefficients of $f_i$ are from $\k$. 
    Let $\Delta(T,U,V)$ be the determinant of the following matrix, where $T,U,V$ are indeterminates:
$$\begin{bmatrix} 
a_1U + a_2V & b_1 U+b_2V &   d_1U+ d_2V \\
 b_1U+ b_2 V&  c_1U+ c_2V &  e_1 U+ e_2V \\  d_1U+ d_2V &  e_1U+ e_2V & g_1U+ g_2V+T \\
\end{bmatrix}.$$
Let $\lambda,\alpha,\beta \in \k$, not all zero. Since $\k$ is algebraically closed, the quadratic  $\alpha f_1 + \beta f_2+\lambda$ is  reducible   over 
$\k$  if and only if   $\Delta(\lambda,\alpha,\beta)=0.$ 
Viewing $\Delta$ as a polynomial   over the ring $\k[U,V]$, the coefficient of $T$  is
$$\Phi(U,V) = (a_1c_1-b_1^2)U^2+(a_1c_2+a_2c_1-2b_1b_2)UV+(a_2c_2-b_2^2)V^2.$$ 
Since the degree of $T$ in $\Delta$ is at most $1$, any choice of point $[\alpha:\beta]$ on the projective line  $\P^1({\k})$ over ${\k}$  that is not a zero of $\Phi$ results in a choice for $T$ that is a root of $\Delta(T,\alpha,\beta)$, and hence yields a reducible quadratic in the asymptotic pencil generated by $f_1,f_2$.  

We claim  the polynomial $\Phi$ is not uniformly zero. Suppose to the contrary that it is. 
For each $i$, since $f_i$ is a quadratic, it cannot be that all of $a_i,b_i,c_i$ are zero. 
Suppose $a_1 =0$. From the coefficient of $U^2$ in $\Phi$ we obtain $b_1=0$ and hence $c_1\ne 0$.  Examination of the coefficient of $UV$ shows then that $a_2 = 0$, and hence, using the coefficient of $V^2$, $b_2 =0$, in which case $c_2 \ne 0$. In summary, $a_1 =b_1 =a_2=b_2=0$, $c_1 \ne 0$ and $c_2 \ne 0$.  Thus, with $\alpha = c_2$ and $\beta =-c_1$, the polynomial $\alpha f_1 + \beta f_2$ has degree at most $1$, contrary to the fact that $f_1$ and $f_2$ are independent. This contradiction implies that if $\Phi$ is uniformly $0$, then  $a_1 \ne 0$. A symmetrical argument shows $a_2 \ne 0$. 

Still assuming that $\Phi$ is uniformly $0$, the fact that $a_1 \ne 0$ and $a_2 \ne 0$ implies $c_1 = b_1^2/a_1$ and $c_2 = b_2^2/a_2$. Substituting this into the coefficient of $UV$, we conclude   $a_1b_2-a_2b_1 =0$. Since $a_1$ and $a_2$ are nonzero, the elements $b_1$ and $b_2$ are nonzero also. From the coefficients of $U^2$ and $V^2$, we obtain $b_1/a_1 = c_1/b_1$ and $b_2/a_2 = c_2/b_2$. From this and the fact that  $a_1b_2-a_2b_1=0$ it follows that $c_1/b_1 = c_2/b_2$, which implies  $b_1c_2-c_1b_2 =0$. Thus $a_1b_2-a_2b_1 =b_1c_2-c_1b_2=0$, and so the vectors $(a_1,2b_1,c_1)$ and $(a_2,2b_2,c_2)$ in $\k^3$ are linearly dependent. This implies there are $\alpha, \beta \in \k$, not both $0$, for which   $\alpha f_1 + \beta f_2$ has degree at most $1$, a contradiction that implies $\Phi$ is not uniformly $0$.

The asymptotic pencil generated by $f_1$ and $f_2$ is the set of quadratics $\alpha f_1 + \beta f_2 + \lambda$ such that $[\alpha:\beta:\lambda] \in \P^2(\k)$ and $\Delta(\lambda,\alpha,\beta) =0$. The polynomial $\Delta$ is a cubic since the polynomial $\Phi$ is not uniformly zero.  
The dual points of the quadratics in $\k f_1 + \k f_2 + \k$ lie in a plane in  $\P^5(\k)$, and so  the asymptotic pencil genereated by $f_1$ and $f_2$ is a cubic in this plane.  
\end{proof} 

If $\k$ is not an algebraically closed field, then by extending $\k$ to its algebraic closure, it follows that  an asymptotic pencil over $\k$ is a piece of the cubic   from Proposition~\ref{cubic} over the algebraic closure of $\k$.
 The extra points on the cubic that are not on the asymptotic pencil over $\k$ are the dual points of the ellipses in the original pencil of quadratics.  
The cubic in Proposition~\ref{cubic} is studied in more detail in \cite{OW6}.



\section{Pencils generated by quadrilaterals}

Classically, pencils of conics arise in connection with vertices of complete quadrilaterals (or quadrangles) in the projective plane because of the fact that the set of projective conics through any four distinct points in general position is a pencil; see Figures 1(c) and 2(b). 
However, there are  pencils that are not defined by the vertices of quadrilaterals in the affine plane, as in  Figures~1(a), 1(b), 2(a) and 2(c), and so 
there are more pencils than quadrilaterals. By contrast,  every nontrivial asymptotic pencil---nontrivial in the sense defined below---can be viewed as given by a quadrilateral.  The main purpose of this section is to prove this assertion.  

By a {\it $($complete$)$ quadrilateral} $Q=ABA'B'$ we mean two pairs of lines $A,A'$ and $B,B'$  in the affine plane  such that neither pair is a translation of the other,  these pairs do not share a line, the lines $A,A',B,B'$ do not all share a point, and these four lines are not all parallel.
The pairs $A,A'$ and $B,B'$  are the pairs of {\it opposite sides} of $Q$; the other pairs are {\it adjacent} sides of $Q$.
%
   The intersection of two adjacent   sides of $Q$   is a {\it vertex}. (If a pair of adjacent sides has parallel lines, then the vertex is the point at infinity for these sides.) The lines joining the pairs of opposite vertices  are the {\it diagonals} of $Q$. 
The definition  implies 
 a quadrilateral $Q$   has more than one vertex, that no adjacent sides are equal and  that the line at infinity is neither a  diagonal or a side of $Q$.  
Opposite sides can be equal, in which case $Q$ is a {\it degenerate} quadrilateral.  


\begin{definition}
The {\it pencil of $Q$} is the pencil defined by the reducible quadratics whose zero sets are the pairs of opposites sides of $Q$.   

\end{definition}



Proposition~\ref{quad exists} will show that as long as an asymptotic pencil is not trivial, in the following sense, there is a quadrilateral that generates it.

\begin{definition} 
An asymptotic pencil   is {\it trivial} if it consists only of degenerate hyperbolas, all of which have  the same center. \end{definition}


\begin{proposition} \label{trivial} If $\k$ is an algebraically closed field, then no asymptotic pencil is trivial. 
\end{proposition}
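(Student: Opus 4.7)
The plan is to argue by contradiction: assume the asymptotic pencil generated by independent quadratics $f_1,f_2$ is trivial, so all of its members are degenerate hyperbolas sharing a common center $c$. After translating, I may assume $c$ is the origin.

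First, I would record a normalization. By Proposition~\ref{late lemma}, the unique degeneration of a hyperbola is its pair of asymptotes, and those asymptotes meet at the center of the hyperbola. So every hyperbola in the pencil has center at the origin, which forces the coefficients $d, e$ of $X$ and $Y$ in its defining quadratic $aX^2+bXY+cY^2+dX+eY+g$ to vanish (set the partial derivatives equal to zero at the origin). Since algebraically closed fields are infinite, Corollary~\ref{generators2 cor} supplies two independent degenerate hyperbolas, hence two independent hyperbolas, in the pencil; by Lemma~\ref{generators} I may replace $f_1,f_2$ by this pair. So both $f_1$ and $f_2$ have $d_i=e_i=0$, and consequently every $\alpha f_1+\beta f_2$ in the pencil has $d=e=0$ as well.

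Next, consider the homogeneous discriminant of the degree-two part,
$$\Delta(\alpha,\beta) \;=\; (\alpha b_1+\beta b_2)^2 - 4(\alpha a_1+\beta a_2)(\alpha c_1+\beta c_2),$$
a binary quadratic form over $\k$ whose vanishing records exactly when $\alpha f_1+\beta f_2$ has a single point at infinity, i.e.\ is a parabola. Since $f_1$ is a hyperbola, $\Delta(1,0)=b_1^2-4a_1c_1\ne 0$, so $\Delta$ is not identically zero. Because $\k$ is algebraically closed, every nonzero binary quadratic form factors into linear factors and therefore has a nontrivial zero; pick $(\alpha_0,\beta_0)\ne(0,0)$ with $\Delta(\alpha_0,\beta_0)=0$.

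Finally, I would examine the conic $g:=\alpha_0 f_1+\beta_0 f_2$. Its nonzero degree-two homogeneous part is a perfect square $(pX+qY)^2$, and the absence of linear terms gives $g=(pX+qY)^2+\lambda$ for some $\lambda\in\k$. Over the algebraically closed field $\k$, this factors either as a double line (if $\lambda=0$) or as a pair of distinct parallel lines (if $\lambda\ne 0$); in either case $g$ is a degenerate parabola in the pencil. By Proposition~\ref{late lemma}, the degenerations of $g$ are pairs of parallel lines sharing the midline of $g$, and such pairs lie in the asymptotic pencil; none of them is a degenerate hyperbola, contradicting triviality. The main obstacle is the bookkeeping needed to show that centering every hyperbola at the origin rigidly forces $d=e=0$ throughout the whole pencil; once that is clean, algebraic closure of $\k$ produces the degenerate parabola automatically through the binary form $\Delta$.
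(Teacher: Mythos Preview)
Your proof is correct. Both your argument and the paper's follow the same overall strategy: pick two independent degenerate hyperbolas in the asymptotic pencil sharing the common center (which, after translation, may be taken to be the origin), and then show that the pencil they span contains a double line or a degenerate parabola, contradicting triviality.

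The execution differs. The paper normalizes to $f_1=XY$ and $f_2=(aX+bY)(cX+dY)$ and explicitly solves for $\alpha,\beta,e,f$ with $\alpha f_1+\beta f_2=(eX+fY)^2$, treating the cases $b=0$, $d=0$, and $bd\ne0$ separately; in the last case algebraic closure is invoked to extract a square root. You instead observe that the discriminant of the degree-two part of $\alpha f_1+\beta f_2$ is a nonzero binary quadratic $\Delta(\alpha,\beta)$, which over an algebraically closed field must have a nontrivial zero; this immediately produces a conic whose leading form is a perfect square. Your route is more uniform and avoids the case split, while the paper's route is more explicit and actually exhibits the double line. Both rest on the same essential input (algebraic closure forces a square in the leading form), so I would describe yours as the same idea carried out more slickly rather than a genuinely different method.

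One small remark: Corollary~\ref{generators2 cor} only asserts the existence of two degenerate hyperbolas, not their independence; the independence you need follows from Lemma~\ref{singular}(2), since two distinct degenerate hyperbolas in the asymptotic pencil cannot be dependent. Also, once you have chosen $f_1,f_2$ to be degenerate hyperbolas through the origin, they are in fact homogeneous of degree two, so your constant $\lambda$ is automatically zero and you recover exactly the double line the paper finds.
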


\begin{proof}  
We prove first that if $f_1$ and $f_2$ are independent  quadratics defining degenerate hyperbolas sharing the same center, 
 then the pencil generated by $f_1$ and $f_2$ contains a double line. 
 Since the zero sets of $f_1$ and $f_2$ each have two points at infinity,  we can assume 
 after an affine transformation that
 \begin{center} $f_1(X,Y)=XY$ and  $f_2(X,Y) = (aX+bY)(cX+dY)$, where $a,b,c,d \in \k$. 
 \end{center}
 Since  $f_2$ is not a double line and  $f_1$ and $f_2$ are independent,  all of the following conditions must hold:
 \begin{equation} \label{cons}
 ad-bc \ne 0 \quad a \ne 0 {\mbox { \rm or } } d\ne 0 \quad  b\ne 0 {\mbox { \rm or } } c \ne 0 \quad  a\ne 0 {\mbox { \rm or } } b\ne 0 \quad c\ne 0 {\mbox { \rm or } } d\ne 0. 
 \end{equation}  
To prove  
there is a double line in the pencil generated by $f_1$ and $f_2$, we show 
   there are pairs $\alpha,\beta$ and $e,f $ in $ \k$ such that at least one member of each pair is nonzero and
\begin{equation}
 \label{double} \alpha XY+\beta (aX+bY)(cX+dY) =  (eX+fY)^2.
 \end{equation}  
Expanding and collecting like terms, it follows that such a double line $(eX+fY)^2$ can be found in the pencil if and only if 
$$\beta ac -e^2 = 0 \quad \alpha+\beta (ad+bc)  -2ef =0 \quad \beta bd -f^2=0.$$

If $b=0$, then from (\ref{cons}) we have  $a \ne 0$, $c \ne 0$ and $d \ne 0$, and so with 
$$\alpha = -\frac{ad+bc}{ac} \quad \beta = \frac{1}{ac} \quad e = 1 \quad f = 0$$ 
 we obtain equation (\ref{double}).
 Similarly, if $d =0$, we may choose $\alpha,\beta,e,f$ to obtain equation (\ref{double}).
 
 Now suppose $b \ne 0$ and $d \ne 0$. Since $\k$ is algebraically closed, there is  $\theta \in \k$ such that  $bd\theta^2=ac$. With the assignments 
$$\alpha = \frac{2bd\theta-ad-bc}{bd} \quad \beta = \frac{1}{bd} \quad e = \theta \quad f = 1$$ we obtain equation (\ref{double}). Therefore, in all cases the asymptotic pencil generated by  $f_1$ and $f_2$ contains a double line. 

The  proposition now follows since  Corollary~\ref{generators2 cor} and the fact that $\k$ is infinite imply  there are at least two independent degenerate hyperbolas in any asymptotic pencil over $\k$. If these two hyperbolas do not have the same center, then the asymptotic pencil is nontrivial. If they 
do
have the same center, then, by what we have established, the pencil they generate contains a double line, and so the asymptotic  pencil is not trivial since it contains this double line also.  
\end{proof} 

It is easy to construct examples of trivial asymptotic pencils over non-algebraically closed fields. For example, if $\k$ is the field of real numbers, the asymptotic pencil for the conics $XY$ and $X^2-Y^2$ is trivial, as can be checked directly or using the calculations in the proof of Proposition~\ref{trivial}. A more general explanation for this example is that the two degenerate hyperbolas that define the asymptotic pencil are interleaved, with each linear component of each hyperbola  between the two linear components of the other hyperbola. Whenever this is the case, the two degenerate hyperbolas sharing the same center will define a trivial asymptotic pencil. 
We will be interested in nontrivial asymptotic pencils, there being little to say about the trivial case.

The asymptotic pencil in Figure 2(c) illustrates the next lemma. In that figure, the black line is shared by every hyperbola  in the asymptotic pencil.





\begin{lemma} \label{trap} The following are equivalent for an asymptotic pencil $\A$. 
\begin{itemize}
 \item[$(1)$]  $\A$ contains two degenerate hyperbolas that intersect in a line. 
\item[$(2)$] 
 $\A$ contains a degenerate  parabola and a degenerate  hyperbola that intersect in a line.
\item[$(3)$]  $\A$ contains two  conics that intersect in a line.
\end{itemize} 
In this case, all  hyperbolas in $\A$  intersect in the same line. 
%

%
%

\end{lemma}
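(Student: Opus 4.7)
The implications $(1) \Rightarrow (3)$ and $(2) \Rightarrow (3)$ are immediate, since degenerate hyperbolas and degenerate parabolas are both conics. All content of the lemma lies in showing that $(3)$ implies both $(1)$ and $(2)$, together with the closing assertion that every degenerate hyperbola in $\A$ passes through a single common line.

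Assume $(3)$ and fix two distinct conics $C_1, C_2 \in \A$ sharing a line $\ell$. Writing each $C_i$ as the zero set of a quadratic $g_i = \ell \cdot p_i$ for some linear $p_i$, I would first check that $g_1, g_2$ are independent. If not, then by Lemma~\ref{singular}(2) they would either define the same pair of non-parallel lines---contradicting $C_1 \ne C_2$---or be pairs of parallel lines with a common midline, in which case the fact that both pairs already contain $\ell$ forces $p_1=p_2$ and again contradicts $C_1 \ne C_2$. Lemma~\ref{generators} then allows me to regenerate the full asymptotic pencil from $g_1, g_2$, so every conic in $\A$ is defined by a quadratic of the form
\[
h_{\alpha,\beta,\lambda} \;=\; \ell\,(\alpha p_1 + \beta p_2) + \lambda, \qquad (\alpha,\beta,\lambda) \in \k^3 \setminus \{0\}.
\]
Moreover, independence of $g_1,g_2$ forces $\alpha p_1 + \beta p_2$ to be a genuine linear polynomial for every $(\alpha,\beta)\ne (0,0)$: otherwise $\alpha g_1 + \beta g_2$ would be $\ell$ times a constant and have degree less than $2$.

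The heart of the argument is to determine which $h_{\alpha,\beta,\lambda}$ are reducible. Every quadratic in the subpencil $\k g_1 + \k g_2$ is already reducible as a product of $\ell$ with a linear form, so it is a degenerate hyperbola when $\alpha p_1 + \beta p_2$ is not parallel to $\ell$ and a degenerate parabola when it is. Invoking Proposition~\ref{late lemma}, a degenerate hyperbola has only itself as a degeneration, while a pair of parallel lines has as its degenerations the pairs of parallel lines sharing the same midline. It follows that any reducible $h_{\alpha,\beta,\lambda}$ falls into exactly one of two cases: either $\lambda = 0$ and $h$ is a pair of lines through $\ell$, or $[\alpha:\beta]$ is the (unique up to scalar) direction for which $\alpha p_1 + \beta p_2$ is parallel to $\ell$, in which case $h$ is a pair of parallel lines. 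The first case contains every degenerate hyperbola in $\A$, so all of them pass through $\ell$; this is the closing assertion.

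To extract $(1)$ and $(2)$, I would observe that the ``parallel to $\ell$'' condition carves out a proper subspace of the two-dimensional space $\k p_1 + \k p_2$: if it were the whole space, both $p_i$ would be parallel to $\ell$, forcing the degree-$2$ parts of $g_1$ and $g_2$ to be proportional and contradicting independence. Thus exactly one projective class $[\alpha:\beta] \in \P^1(\k)$ produces a degenerate parabola through $\ell$, while the remaining $|\k|$ classes produce distinct degenerate hyperbolas through $\ell$. Since $\operatorname{char}\k\ne 2$ forces $|\k|\ge 3$, $(1)$ and $(2)$ follow simultaneously. The main obstacle is the reducibility dichotomy in the previous paragraph, specifically ruling out that $h_{\alpha,\beta,\lambda}$ with $\lambda\ne 0$ could be a degenerate hyperbola not containing $\ell$; the cleanest route is via Proposition~\ref{late lemma} after noting that every conic in $\k g_1 + \k g_2$ already contains $\ell$, rather than grinding out a direct factorization of $h$.
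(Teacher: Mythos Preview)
Your proof is correct and rests on the same core idea as the paper's---namely, regenerating the asymptotic pencil from the two conics sharing $\ell$ (via Lemma~\ref{generators}) and then using Proposition~\ref{late lemma} to classify which members of $\k g_1+\k g_2+\k$ are reducible---but your organization is tighter. The paper proves $(1)\Rightarrow(2)$, $(2)\Rightarrow(1)$, and $(2)\Leftrightarrow(3)$ separately, each time passing to explicit coordinates (e.g.\ $g_1=XY$, $g_2=X(dX+eY+f)$) and computing directly which $\alpha XY+\beta X(dX+eY+f)+\lambda$ factor. You instead go straight from $(3)$ to everything else in one coordinate-free sweep: writing $g_i=\ell\cdot p_i$, observing that every conic in $\k g_1+\k g_2$ already contains $\ell$, and invoking the degeneration dichotomy of Proposition~\ref{late lemma} to conclude that the only reducibles with $\lambda\ne0$ are degenerate parabolas. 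Your argument that exactly one projective class $[\alpha:\beta]$ makes $\alpha p_1+\beta p_2$ parallel to $\ell$ (because independence of $g_1,g_2$ forces $p_1,p_2$ non-parallel, so their degree-one parts span) is the coordinate-free analogue of the paper's observation that $e\ne0$ in $g_2=X(dX+eY+f)$. Both routes ultimately appeal to the same lemmas; yours just avoids the affine normalization and the separate implication chain.
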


\begin{proof} 
We will prove the last assertion of the lemma in the course of proving (1) implies~(2). 

%
(1) $\Rightarrow$ (2): 
Suppose $g_1$ and $g_2$ define degenerate hyperbolas in $\A$ that share a line. 
After an affine transformation, we may assume  $g_1(X,Y) = XY$ and  the shared line is $X=0$. Write $g_2(X,Y) = X(dX+eY+f)$, where $d,e,f\in \k$ and $d$ and $e$ are not both $0$. Since $g_2$ is a hyperbola, 
 $e \ne 0$.
 A quadratic in $\k g_1 + \k g_2 + \k$ has the form 
  $$\alpha XY+\beta X (dX+eY+f)+\lambda = X(\beta d X +(\beta e+\alpha)Y+\beta f) +\lambda,$$ 
 where $ \alpha,\beta,\lambda \in \k$  and $ \alpha,\beta$ are not both $0$.
  By Proposition~\ref{late lemma}, a necessary condition for such a quadratic to be reducible is that  $\lambda =0$ or $\beta e +\alpha =0$. 
   If this quadratic is a hyperbola, then  $\beta e + \alpha \ne 0$ and $\lambda=0$, and so we  have the degenerate hyperbola $X(\beta d X +(\beta e+\alpha)Y+\beta f)$, which shares the linear component~$X$ with $g_1$ and $g_2$. 
Note  also that $\A$ contains the degenerate parabola $$-eXY + X(dX+eY+f) = X(dX+f),$$ and hence by Proposition~\ref{late lemma} contains a degenerate parabola that shares a linear component with the hyperbola $XY$ in $\A$. This proves that if $\A$ contains two independent hyperbolas that share a linear component, then $\A$ contains a degenerate parabola and a degenerate hyperbola that intersect in a line, and  all hyperbolas in $\A$ intersect in this same line.

(2) $\Rightarrow$ (1): 
Suppose $\A$ contains
a  degenerate hyperbola $g_1=0$ and a degenerate parabola $g_2=0$ that intersect in the same line. Without loss of generality, \begin{center}
$g_1(X,Y) = XY$ and $g_2(X,Y) = X(X-a)$ for some $a \in \k$. 
\end{center} 
Thus
  $\A$ contains the   hyperbola  $$  XY+  X(X-a)  = 
  X(X +  Y-a).$$ 
This hyperbola and the hyperbola defined by $g_1$ are distinct   and intersect in a line. 


(2) $\Leftrightarrow$ (3):
It is clear (2) implies (3). Conversely, suppose  $g_1$ and $g_2$ are independent   quadratics whose zero sets are in $\A$  and share a linear component. If $g_1$ and $g_2$ are both hyperbolas, then 
since we have shown already that (1) and (2) are equivalent, statement (2) follows. So suppose without loss of generality that $g_1$ is not a hyperbola. Then, since $g_1$ is reducible, $g_1$ is a  parabola. If $g_2$ is also a   parabola, then since $g_1$ and $g_2$ share a linear component, all the linear components of $g_1$ and $g_2$ are parallel, contrary to the assumption that $g_1$ and $g_2$ are independent. Thus $g_2$ is a   hyperbola, and so $\A$ contains a   hyperbola and a   parabola that share a line. 
%
\end{proof}

 By Corollary~\ref{generators2 cor}, if $|\k|>3$, then every asymptotic pencil can be specified by two degenerate hyperbolas, in fact by  any two different reducible quadratics in the asymptotic pencil. 
 If these reducible conics intersect in a line, they do   not define  a pair of opposite sides of a  quadrilateral. 
However, as we show next, as long as the asymptotic pencil is nontrivial,   these reducible conics can be traded for two more that make a quadrilateral. This is true even if $|\k|= 3$ elements, but to obtain a non-degenerate quadrilateral we need once again that $|\k|>3$.

\begin{proposition} \label{quad exists} 
An asymptotic pencil  ${\mathbb{A}}$ is nontrivial if and only if ${\mathbb{A}}$ is the asymptotic pencil of   some quadrilateral $Q$.     If $|\k|>3$, then  $Q$ can be chosen a non-degenerate quadrilateral. 

\end{proposition}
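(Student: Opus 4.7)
The backward direction is straightforward: if $\mathbb{A}$ is the asymptotic pencil of a quadrilateral $Q = ABA'B'$, then the two pairs of opposite sides are reducible conics in $\mathbb{A}$. If either pair is a pair of parallel lines, then $\mathbb{A}$ contains a degenerate parabola and is nontrivial. Otherwise both pairs are degenerate hyperbolas with centers $A \cap A'$ and $B \cap B'$, which are distinct because the four lines of $Q$ do not share a common point, so $\mathbb{A}$ is nontrivial in this case as well.

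For the forward direction, assume $\mathbb{A}$ is nontrivial. By Corollary~\ref{generators2 cor}, $\mathbb{A}$ contains a degenerate hyperbola $H$; the plan is to locate a second reducible conic $C \in \mathbb{A}$ such that $H$ and $C$ satisfy every condition demanded of the opposite sides of a quadrilateral, after which Lemma~\ref{generators} will guarantee that the asymptotic pencil of the resulting quadrilateral coincides with $\mathbb{A}$. Nontriviality forces at least one of the following: $\mathbb{A}$ contains a second degenerate hyperbola $H'$ whose center differs from that of $H$, or $\mathbb{A}$ contains a degenerate parabola.

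In the first situation, if $H$ and $H'$ share no line they already form the opposite sides of a non-degenerate quadrilateral. Indeed, by Lemma~\ref{singular} they are independent, so their degree-$2$ homogeneous parts differ and they cannot be translations of each other; the four lines cannot share a single point because $H$ and $H'$ have distinct centers; and they are not all parallel since each pair consists of non-parallel lines. If instead $H$ and $H'$ share a line, Lemma~\ref{trap} produces a degenerate parabola in $\mathbb{A}$ whose two lines are parallel to the common line $\ell$ of all hyperbolas in $\mathbb{A}$, dropping us into the second situation. In the second situation, Proposition~\ref{late lemma} supplies every pair of parallel lines sharing the midline of the parabola, and when $|\k| > 3$ this family is large enough to select a pair $P$ whose two lines avoid both lines of $H$. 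Then $H$ and $P$ form a non-degenerate quadrilateral, since the degree-$2$ part of $H$ (a product of two distinct linear forms) is not proportional to the degree-$2$ part of $P$ (a perfect square), and the remaining quadrilateral conditions are easily verified.

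When $|\k| = 3$ the family of parallel pairs with a fixed midline may be too small to avoid the lines of $H$; here I intend to allow a degenerate quadrilateral, selecting for instance the midline itself as a double line to serve as one of the pairs of opposite sides. The main obstacle throughout is verifying the no-shared-line condition in the definition of a quadrilateral, which is what forces the invocation of Lemma~\ref{trap} to transition between the two subcases and what makes the small-field case $|\k|=3$ require a separate, hands-on verification.
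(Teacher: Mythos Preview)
Your approach is essentially the paper's, with the case split reversed: the paper first asks whether $\mathbb{A}$ contains a degenerate parabola (and when it does not, uses the contrapositive of Lemma~\ref{trap} to conclude that any two hyperbolas with different centers share no line), whereas you start from two hyperbolas and invoke Lemma~\ref{trap} directly to pass to the parabola case when they share a line. Both routes work and lead to the same endgame.

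Your argument is complete and correct for $|\k|>3$. For $|\k|=3$, which you flag as needing a hands-on verification, here is the completion. With $|\k|=3$ the family of parallel pairs sharing the given midline has exactly two members: the double line and one pair with distinct components. Since the two lines of $H$ are not parallel, at most one of them is parallel to the midline, so at most one of these two parabolas can share a line with $H$; use the other. One subtlety worth noting: if you choose the double line $m$, you must also check that $m$ does not pass through the center of $H$, since otherwise the four lines are concurrent and the ``not all through one point'' axiom fails even though no line is shared. This can occur (for instance $H=\{Y=X,\,Y=-X\}$ with midline $X+Y=0$), but only when $m$ is parallel to neither line of $H$; in that situation the non-double pair automatically shares no line with $H$ and, having two distinct parallel components, cannot be concurrent with the lines of $H$. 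So preferring the non-double pair whenever it avoids $H$ closes the gap.
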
 

\begin{proof} 
If $\A$ is the asymptotic pencil of a quadrilateral $Q$, the pairs of opposite sides of $Q$ define reducible conics in $\A$ that, if they are hyperbolas, do not share the same center. This guarantees $\A$ is nontrivial.   To prove the converse, first note that  
by Corollary~\ref{generators2 cor} there is a degenerate hyperbola $f_1$ in $\A$.  
Since $\A$ is nontrivial,  $\A$ contains 
either a hyperbola with a different center than that of $f_1$ or a degenerate parabola.

If $\A$ contains a degenerate parabola $f_2$, then  
  by Proposition~\ref{late lemma} and the fact that $|\k| \geq 3$, the asymptotic pencil $\A$ contains a double line and a  degenerate parabola   whose linear components are distinct and parallel to the double line. One of these two degenerate parabolas does not share a linear component with $f_1$ and hence forms a quadrilateral with the zero set of $f_1$. Thus if $\A$ contains a degenerate parabola, $\A$ contains a quadrilateral $Q$ and 
 $\A$ is the asymptotic pencil of $Q$. 
  
 If $\A$ does not contain a  parabola, then $\A$ contains a  hyperbola $f_2$ that does not share a center with $f_1$.  Since $\A$ does not contain a   parabola, Lemma~\ref{trap} implies $f_1$ and $f_2$ do not share a linear component, and hence $f_1$ and $f_2$ define a quadrilateral $Q$. 
 Lemma~\ref{singular} implies $f_1$ and $f_2$ are independent, so 
 $\A $ is the asymptotic pencil of $Q$.  This proves the first assertion of the proposition.

 Finally, suppose 
     $|\k| >3$. If $Q$ is degenerate, then $Q$ has a pair of parallel opposite sides that are equal.   By Proposition~\ref{late lemma} and the fact that $|\k|>3$,  $\A$ contains a double line and two  degenerate parabolas whose zero sets are  distinct and  whose linear components are parallel to the double line. We can replace the double line that is a pair of opposite sides of $Q$ with one of the degenerate parabolas that does not share a linear component with the other conic that defines $Q$. In doing so, we obtain a non-degenerate quadrilateral in $\A$.  
\end{proof}




\section{Bisectors of conics}

For a line $\ell$, denote by $\overline{\ell} = \ell \cup \{\infty\}$ the projective closure of  $\ell$, i.e., the line $\ell$ and its point at infinity $\infty$ in the projective plane. 
We define the {\it midpoint} of  two points $p,q$ on $\overline{\ell}$ as the usual midpoint of two points if $p,q\in \ell$, and  as $\infty$ if $p \in \ell$ and $q =\infty$. In the first case the midpoint is {\it finite} and in the second it is {\it infinite}. 
If  $p = q = \infty$,  the midpoint is   {\it undetermined}. 





It will be convenient to have  terminology for the different ways a line can intersect a conic.  
A line $\ell$ {\it meets} a conic defined by a quadratic $f$ if there is a point, possibly at infinity, on both $\ell$ and $f$; i.e., 
 the projective closure of $\ell$ has nonempty intersection with the   projective closure of the zero set of  $f$. 
 The line $\ell$
 {\it crosses} the zero set $V(f)$ (or $f$, for short)   if $\ell$ is not a component of $V(f)$ and  
 $\ell$ meets $V(f)$ in at least one point in the affine plane. Note that any line meets any reducible conic but may not cross it, since the line could be a component of the conic or meet the conic only at infinity.  
If $\ell$ crosses $f$, then   $\mid_{f}(\ell)$ denotes the midpoint of the two crossing points, one of which may be at infinity.


   
   
   

\begin{definition} 
A line $\ell$ {\it bisects} a collection ${\mathcal{C}}$ of conics (or the quadratics that define the conics) with {\it midpoint} $m$  if 
$m =\mid_f(\ell) $ for all quadratics $f$  whose zero sets are in ${\mathcal{C}}$ and are crossed by $\ell$. 
  If no conic in ${\mathcal{C}}$ is crossed by $\ell$, then $\ell$ is vacuously a bisector of ${\mathcal{C}}$ and its midpoint $\mid_f(\ell)$ is {\it undetermined} for each quadratic $f$ whose zero set is in ${\mathcal{C}}$.

\end{definition}

    \begin{figure}[h] 
     \label{9noobconics}
 \begin{center}
\includegraphics[width=.55\textwidth,scale=.09]{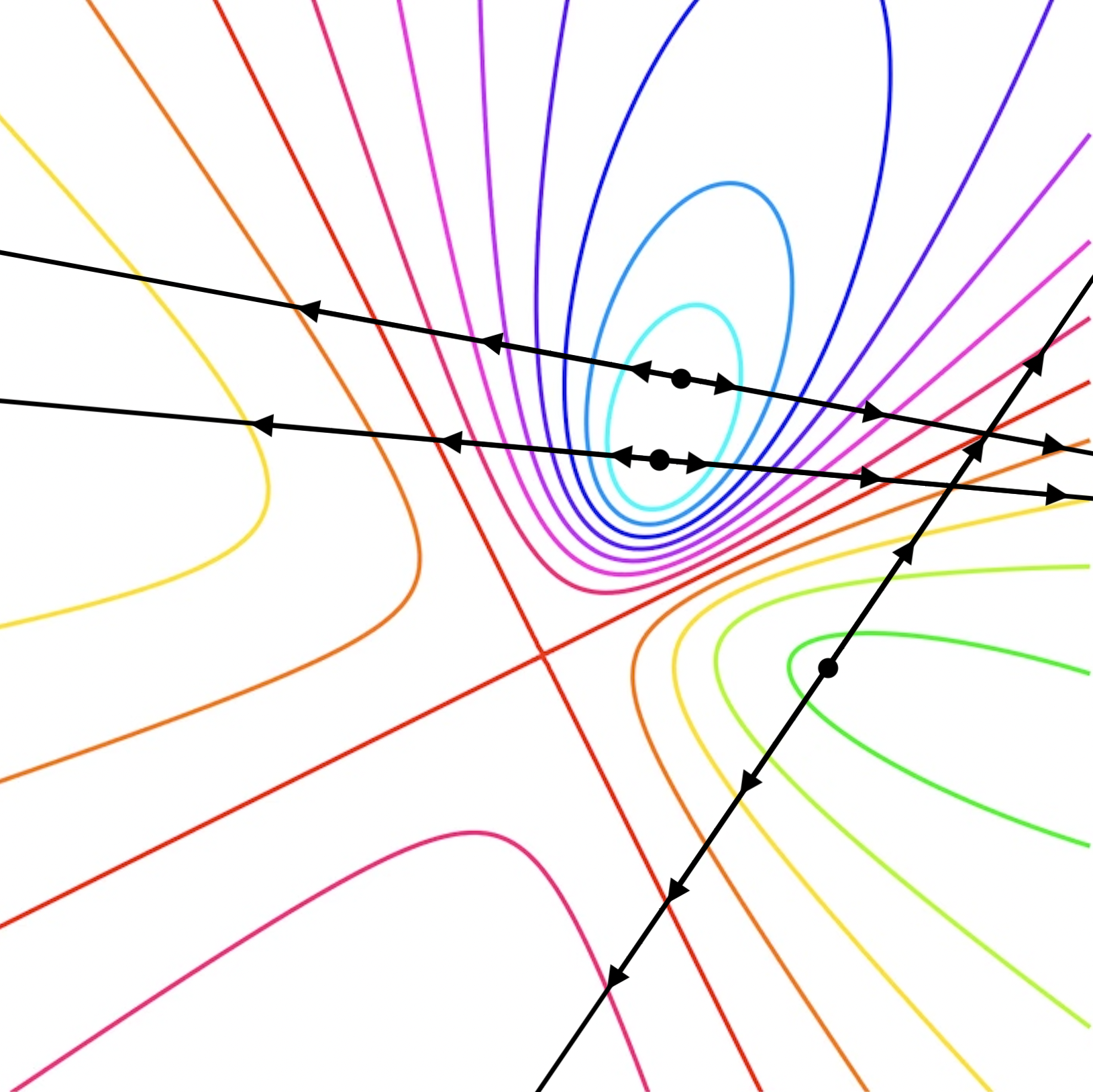} 
 \end{center}
 \caption{The three black lines are bisectors of the  collection of conics in the figure. The midpoints are indicated by black dots and the arrows are to help visualize the bisection property.}
\end{figure}

Figure 3 illustrates the definition. 
The next lemma provides the first step in  relating asymptotic pencils to the bisection property. 

 \begin{lemma}  \label{main lemma} Let ${\mathbb{A}}$ be the asymptotic pencil generated by  quadratics $f_1$ and $f_2$. 
A
 line $\ell$  that meets  the zero sets of $f_1$ and $f_2$ bisects the pair $\{f_1,f_2\}$ if and only if $\ell$    is a   component of a reducible conic    in ${\mathbb{A}}$. 
 %
 
 
\end{lemma}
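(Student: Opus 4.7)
The plan is to parametrize $\ell$ by a base point $p\in\ell$ and a nonzero direction vector $v$, so that $\ell(t)=p+tv$, and to study how the quadratics in $\k f_1+\k f_2+\k$ restrict to $\ell$. For each $i\in\{1,2\}$ there are scalars $A_i,B_i,C_i\in\k$ with
\[
f_i(\ell(t))=A_i t^2+B_i t+C_i,
\]
where $A_i$ is the degree-two homogeneous part of $f_i$ evaluated at $v$. An arbitrary element $\alpha f_1+\beta f_2+\lambda$ then restricts along $\ell$ to $(\alpha A_1+\beta A_2)t^2+(\alpha B_1+\beta B_2)t+(\alpha C_1+\beta C_2+\lambda)$.

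First I would establish a component characterization. The line $\ell$ is a component of some reducible conic in $\A$ precisely when the restricted polynomial above vanishes identically in $t$ for some $(\alpha,\beta,\lambda)$ with $(\alpha,\beta)\ne(0,0)$. Equating coefficients produces the system
\[
\alpha A_1+\beta A_2=0,\quad \alpha B_1+\beta B_2=0,\quad \alpha C_1+\beta C_2+\lambda=0.
\]
The third equation merely solves for $\lambda$, and independence of $f_1,f_2$ guarantees that the resulting element is a genuine quadratic (automatically reducible, with $\ell$ as a factor). Hence the component condition reduces to the existence of a nonzero $(\alpha,\beta)$ killing the first two equations, equivalently to the determinantal identity $A_1B_2=A_2B_1$.

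Next I would match this identity to the bisection condition. In the generic case $A_1,A_2\ne 0$, the direction $v$ is asymptotic to neither $f_i$, so the hypothesis that $\ell$ meets $V(f_i)$ forces $\ell$ to cross $f_i$ in the affine plane; by Vieta, $\mid_{f_i}(\ell)$ corresponds to the parameter $t=-B_i/(2A_i)$, and so $\mid_{f_1}(\ell)=\mid_{f_2}(\ell)$ becomes $B_1/A_1=B_2/A_2$, i.e.\ $A_1B_2=A_2B_1$. The delicate work is in the degenerate subcases where some $A_i$ vanishes. If $A_1=0\ne A_2$, then $v$ is asymptotic only to $f_1$, so $\ell$ meets $f_1$ at infinity; $\ell$ crosses $f_1$ affinely iff $B_1\ne 0$, in which case $\mid_{f_1}(\ell)$ is infinite while $\mid_{f_2}(\ell)$ is finite, so bisection fails exactly when $B_1\ne 0$, agreeing with $A_2B_1=0$. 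If $A_1=A_2=0$, the identity is automatic; a direct check (using Proposition~\ref{late lemma} to interpret the degenerate parabolas that can appear in $\A$) shows that either both midpoints are $\infty$ or at least one of the $V(f_i)$ is not crossed, so $\ell$ bisects the pair.

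The main obstacle is the disciplined bookkeeping required in the degenerate subcases, keeping straight the distinction between meeting at infinity and crossing, and using the meeting hypothesis to rule out extraneous configurations in which $\ell$ neither crosses nor meets a conic at infinity. Once the case analysis is assembled, both directions of the lemma collapse into the single equivalence of bisection with the algebraic identity $A_1B_2=A_2B_1$.
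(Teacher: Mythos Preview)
Your proposal is correct and follows essentially the same route as the paper's proof. The paper normalizes $\ell$ to the line $Y=0$ via an affine transformation and then works with the explicit coefficients $a_i,d_i$ of $f_i(X,0)=a_iX^2+d_iX+g_i$, arriving at the criterion $a_1d_2=a_2d_1$; your parametrization $\ell(t)=p+tv$ produces the identical criterion $A_1B_2=A_2B_1$ without the coordinate change, and your three-way split on which $A_i$ vanish matches the paper's Claims~2--5 on finite, infinite, and undetermined midpoints. One small remark: your appeal to Proposition~\ref{late lemma} in the $A_1=A_2=0$ case is unnecessary, since the bisection follows directly from the parametric analysis (both midpoints are infinite when $B_1,B_2\ne 0$, and otherwise at least one is undetermined).
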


\begin{proof}  
After an affine transformation, we may assume $\ell$ is the line $Y=0$.  The line $\ell$ is a member of a pair in ${\mathbb{A}}$
 if and only if there are $\alpha,\beta,\lambda,t,u,v \in \k$ such that  $\alpha$ and $\beta$ are not both $0$; $t$ and $u$ are not both $0$; and  
\begin{eqnarray}
\label{aaa} \alpha f_1+
\beta f_2 = Y(tX-uY+v)+\lambda.
\end{eqnarray}
For each $i$, write $$f_i(X,Y) = a_iX^2+b_iXY+c_iY^2+d_iX+e_iY+g_i.$$ 
Expanding, collecting like terms and equating coefficients, we obtain that equation (\ref{aaa}) holds if and only if 
\begin{eqnarray} \label{tuv}
t & = &  \alpha b_1+\beta b_2 \\
 \label{tuv2}
u &= & -\alpha c_1-\beta c_2  \\ 
 \label{tuv3}
v & = & \alpha e_1+\beta e_2  \\
 \label{tuv4}
\lambda & = &  \alpha g_1+\beta g_2 \\
\label{former}
0 & = & \alpha a_1 +\beta a_2 \\
\label{latter}
0 & = &  \alpha d_1+\beta d_2.
\end{eqnarray}
Therefore, the line $\ell$ is a member of a pair in ${\mathbb{A}}$
 if and only if there are $\alpha,
 \beta,\lambda,t,u,v \in \k$ such that  $\alpha$ and $\beta$ are not both $0$; $t$ and $u$ are not both $0$; and equations (\ref{tuv})--(\ref{latter}) hold. 
 
We prove the lemma by establishing a series of simple claims.
 
 \smallskip

{\textsc{Claim 1:}}  
 $\ell$ is a member of a pair in ${\mathbb{A}}$ if and only if $a_1d_2 - a_2d_1 = 0$.
 
 \smallskip

There is not a solution to equations (\ref{tuv})--(\ref{latter}) in which  $t=u=0$ but  $\alpha $ and $\beta$ are nonzero,  
 since otherwise
equations (\ref{tuv}), (\ref{tuv2}) and (\ref{former}) with $t =u=0$ imply
$\alpha f_1 + \beta f_2 $ is a polynomial of degree at most $1$, contrary to the fact that $f_1$ and $f_2$ are independent. 
 Using this observation and equations (\ref{former}) and (\ref{latter}), we conclude 
that  
 $\ell$ is a member of a pair in ${\mathbb{A}}$
 if and only if 
$a_1d_2 - a_2d_1 = 0$.


\smallskip

{\textsc{Claim 2:}} If $\mid_{f_1}(\ell)$ and $\mid_{f_2}(\ell)$ are both finite, 
then $\ell$ is a member of a pair in  ${\mathbb{A}}$ if and only if $\ell$ bisects $\{f_1,f_2\}$. 

\smallskip

Suppose $\mid_{f_1}(\ell)$ and $\mid_{f_2}(\ell)$ are both finite. Then $\ell$ crosses both $f_1$ and $f_2$ and does not meet either at infinity. Thus for each $i =1,2$, we have  $a_i \ne 0$ since otherwise the line $\ell$, which is given by  $Y=0$, meets $f_i$ at a point at infinity. 
The points of intersection of $\ell$ with $f_i$ are $(\sigma_i,0)$ and $(\tau_i,0)$, where $\sigma_i$ and $\tau_i$ are the zeroes of $a_iX^2+d_iX+g_i$. 
Necessarily, $\sigma_i + \tau_i = -d_ia_i^{-1}$. 
Therefore, $\sigma_1 + \tau_1 = \sigma_2 + \tau_2$ if and only if  
$d_1a_2 -d_2a_1 =0$. 
The midpoint of the points where $\ell$ crosses $f_i$ is $ (\frac{1}{2}(\sigma_i+\tau_i),0)$ and so $\ell$ bisects the pair $\{f_1,f_2\}$ if and only if $d_1a_2 -d_2a_1 =0$. By Claim~1, this is the case if and only if   $\ell$ is a member of a pair in  ${\mathbb{A}}$.

  \smallskip
  
  {\textsc{Claim 3:}}  If at least one of $\mid_{f_1}(\ell)$ or $\mid_{f_2}(\ell)$  is undetermined, then  $\ell$ is a member of a pair in  ${\mathbb{A}}$ and $\ell$  bisects $\{f_1,f_2\}$. 


  \smallskip
  
  Without loss of generality, $\mid_{f_1}(\ell)$  is undetermined and so 
  $\ell$ does not cross $f_1$.  By assumption, $\ell$ meets $f_1$, so either $\ell$ is a component of $f_1$ or $\ell$ meets $f_1$ at infinity only. In either case, $a_1=d_1=0$, so 
%
 by Claim 1, $\ell$ is a member of a pair in $\A$, and by definition $\ell$ bisects $\{f_1,f_2\}$ since   $\mid_{f_1}(\ell)$ is undetermined and hence  the position of  $\mid_{f_2}(\ell)$ does not matter for the sake of guaranteeing bisection of $\{f_1,f_2\}$.

    \smallskip
  
  {\textsc{Claim 4:}}  If one of 
 $\mid_{f_1}(\ell)$ and $\mid_{f_2}(\ell)$ is finite and the other is infinite, then $\ell$ is not a member of a pair in  ${\mathbb{A}}$ and $\ell$ does not bisect $\{f_1,f_2\}$. 
  
  \smallskip
  
  Since the two midpoints do not agree, $\ell$ does not bisect $\{f_1,f_2\}$. Without loss of generality, suppose $\mid_{f_1}(\ell)$ is infinite and $\mid_{f_2}(\ell)$ is finite. Then $a_1=0$ and $a_2 \ne 0$ since $\ell$ is the line $Y=0$ and $\ell$ meets $f_1$ at infinity but does not meet $f_2$ at infinity. Thus $a_1d_2 -d_1a_2 =0$ if and only if $d_1=0$. However, if $d_1 =0$, then $\mid_{f_1}(\ell)$ is undetermined since also $a_1 =0$, a contradiction. Thus Claim 1 implies $\ell$ is not a member of a pair in  ${\mathbb{A}}$

    \smallskip
  
  {\textsc{Claim 5:}}  If both  
 $\mid_{f_1}(\ell)$ and $\mid_{f_2}(\ell)$ are infinite, then $\ell$ is  a member of a pair in  ${\mathbb{A}}$ and $\ell$  bisects $\{f_1,f_2\}$. 
  
  \smallskip
 
 It is clear $\ell$  bisects $\{f_1,f_2\}$ since  $\mid_{f_1}(\ell) = \mid_{f_2}(\ell)$. Also, since both midpoints are infinite, we have $a_1=a_2=0$, which implies $a_1d_2-a_2d_1=0$, so that  $\ell$ is  a member of a pair in  ${\mathbb{A}}$ by Claim 1.
  \end{proof}






The next theorem shows that bisection of a single pair of independent quadratics lifts to bisection of an entire ``affine net'' of these two conics. 
The first two paragraphs of the proof of the  theorem can be avoided  by using a general version of Desargues' Involution Theorem, such as in \cite[14.2.8.3, p.~125]{Berger}, but instead we use our bisector methods to prove the theorem and then in Corollary~\ref{des} derive Desargues'  theorem as a consequence. 


\begin{theorem} \label{amazing}  Let $f_1$ and $f_2$ be independent quadratics, and let $\ell$ be a line that meets the zero sets of $f_1$ and $f_2$.  
If $\ell$ bisects $\{f_1,f_2\}$, then $\ell$ bisects the set of quadratics in $\k f_1 +\k f_2 + \k$ and hence also the
 asymptotic pencil generated by $f_1,f_2$. 
%
%
\end{theorem}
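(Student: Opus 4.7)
I would start by reducing to coordinates adapted to $\ell$. Applying an affine transformation does not affect midpoints, the bisection property, or the linear span $\k f_1 + \k f_2 + \k$, so I may assume $\ell$ is the line $Y=0$. Write each $f_i$ in the form $f_i(X,Y) = a_i X^2 + b_i XY + c_i Y^2 + d_i X + e_i Y + g_i$. Only the coefficients $a_i, d_i, g_i$ enter the restriction of $f_i$ to $\ell$, and the midpoint $\mid_{f_i}(\ell)$ (when defined and finite) equals the average of the roots of $a_i X^2 + d_i X + g_i$, namely $-d_i/(2a_i)$.

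Next I would invoke Lemma~\ref{main lemma}: under the standing hypothesis that $\ell$ meets both $f_1$ and $f_2$, the bisection of $\{f_1, f_2\}$ by $\ell$ is equivalent to $\ell$ being a component of a reducible conic in the asymptotic pencil generated by $f_1$ and $f_2$. Inspecting Claim~1 inside the proof of Lemma~\ref{main lemma}, this in turn is equivalent to the single algebraic identity $a_1 d_2 - a_2 d_1 = 0$, which is exactly the input I would use going forward.

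For an arbitrary nonzero $g = \alpha f_1 + \beta f_2 + \lambda$ in the affine net, the restriction to $\ell$ is $g(X,0) = A X^2 + D X + G$, where $A = \alpha a_1 + \beta a_2$, $D = \alpha d_1 + \beta d_2$, and $G = \alpha g_1 + \beta g_2 + \lambda$. The central computation is that $a_1 d_2 = a_2 d_1$ forces
\[
A\, d_i - D\, a_i = \alpha(a_1 d_i - d_1 a_i) + \beta(a_2 d_i - d_2 a_i) = 0
\]
for $i=1,2$, so the vector $(A, D)$ is parallel to each of $(a_1, d_1)$ and $(a_2, d_2)$. Whenever $\ell$ crosses $g$ and $A \neq 0$, this proportionality gives $-D/(2A) = -d_i/(2a_i)$ for whichever $a_i \neq 0$, so the midpoint of $g$ along $\ell$ coincides with the common midpoint of $\{f_1, f_2\}$.

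The remaining work, which I expect to be the main obstacle only because of the bookkeeping involved, is to handle the degenerate configurations in which $A=0$ or one of the $a_i$ vanishes. A short case analysis parallel to Claims~2--5 in the proof of Lemma~\ref{main lemma} resolves all of them: when $A = 0$, proportionality with a nonzero $(a_i, d_i)$ forces $D = 0$, so either $\ell$ does not cross $g$ in the affine plane or $\ell$ is a component of $g$, and no constraint is imposed; when $a_1 = a_2 = 0$, the common midpoint is infinite for every conic in the net that $\ell$ crosses; and when exactly one $a_i$ vanishes, the identity forces the corresponding $d_i$ to vanish too, so that conic is not actually crossed by $\ell$ while the other dictates a consistent finite midpoint. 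Since the asymptotic pencil sits inside $\k f_1 + \k f_2 + \k$, the final assertion about bisection of the asymptotic pencil follows immediately.
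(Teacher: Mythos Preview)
Your argument is correct. After normalizing $\ell$ to $Y=0$, you extract from Claim~1 in the proof of Lemma~\ref{main lemma} the single relation $a_1d_2=a_2d_1$ and then verify directly that this forces $(A,D)=(\alpha a_1+\beta a_2,\,\alpha d_1+\beta d_2)$ to be proportional to each $(a_i,d_i)$, so that the midpoint $-D/(2A)$ agrees with the common midpoint whenever it is defined; your case split for $A=0$ and for vanishing $a_i$ covers the residual possibilities.

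This is a genuinely different route from the paper's. The paper does not unpack the algebraic identity $a_1d_2=a_2d_1$; instead, for each $g\in\k f_1+\k f_2+\k$ crossed by $\ell$, it distinguishes whether $g$ and $f_1$ are independent. If they are, Lemma~\ref{generators} says $f_1,g$ generate the same asymptotic pencil as $f_1,f_2$, so $\ell$ is again a component of a reducible conic in that pencil and Lemma~\ref{main lemma} (applied now to the pair $f_1,g$) gives $\mid_g(\ell)=\mid_{f_1}(\ell)$. If they are dependent, Lemma~\ref{singular}(1) gives $g-\lambda=\gamma f_1$, and one checks by hand that translating a quadratic by a constant preserves the midpoint along $\ell$. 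Your approach is more elementary and computationally transparent---it avoids both the generator lemma and the independent/dependent split---at the cost of reaching inside the proof of Lemma~\ref{main lemma} rather than invoking only its statement. The paper's approach is more structural: it keeps the midpoint algebra black-boxed inside Lemma~\ref{main lemma} and instead exploits the symmetry that any two independent members of the net generate the same asymptotic pencil.
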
 

\begin{proof}  Suppose $\ell$ bisects $\{f_1,f_2\}$. 
After an affine transformation, we may assume $\ell$ is the line $Y=0$.  Suppose first that 
$\mid_{f_1}(\ell)$ and $\mid_{f_2}(\ell)$ are undetermined. 
As in the proof of  Claim 3 of Lemma~\ref{main lemma},
 it follows that for $i=1,2$, 
 \begin{center}
 $f_i(X,Y) = b_iXY+c_iY^2+e_iY+h_i$ for some $b_i,c_i,e_i,h_i \in \k$. 
 \end{center}
  Every quadratic in  $\k f_1 + \k f_2 + \k$ thus has this form, and so every such quadratic  either has $Y=0$ as a component  or meets $Y=0$ at infinity only. 
 Thus $\ell$ does not cross any conic defined by a quadratric  in $ \k f_1 + \k f_2 + \k$, which implies 
 $\ell$ vacuously bisects every conic defined by a quadratic in   $\k f_1 + \k f_2 + \k$. 
 For the rest of the proof we assume without loss of generality that $\ell $ crosses $f_1$   and so  $\mid_{f_1}(\ell)$ is determined.

By Lemma~\ref{main lemma}, 
$\ell$ is a component of a conic  in  the asymptotic pencil generated by $f_1,f_2$. 
Let  $g$ be a quadratic in  $\k f_1 + \k f_2 + \k$   that defines a conic   $\ell$ crosses. It suffices to show $\ell$ bisects the set of the three conics defined by  $f_1,f_2,g$. 
  If $g$ and $f_1$ are independent, then  $f_1,f_2$ and $f_1,g$ generate the same asymptotic pencil  by Lemma~\ref{generators}, and since $\ell$ meets $g$ and $f_1$, 
and $\ell$ is a component of a conic in the asymptotic pencil defined by $f_1,g$,
 Lemma~\ref{main lemma} implies that $\ell$ bisects the pair of conics defined by  $f_1$ and $g$.  Therefore, since $\mid_{f_1}(\ell)$ is determined,  $\ell$ bisects $g$ with midpoint $\mid_g(\ell)=\mid_{f_1}(\ell)$.


It remains to examine the case in which  $f_1$ and $g$ are dependent, and to prove in this case that $\mid_{f_1}(\ell) = \mid_g(\ell)$. 
Since $g \in  \k f_1 + \k f_2 + \k$, there is $\lambda \in \k$ such that $g - \lambda \in \k f_1 + \k f_2$. Since $g- \lambda$ and $f$ are dependent and in $\k f_1 + \k f_2$, Lemma~\ref{singular}(1) implies $g - \lambda = \gamma f_1$ for some $0 \ne \gamma \in \k$.  Thus since $\mid_{f_1}(\ell) = \mid_{\gamma f_1}(\ell)$, it suffices to show that $\mid_g(\ell) 
 = \mid_{g-\lambda}(\ell)$.
 
 By assumption, $\ell$ crosses $g$, and since $\ell$ crosses $f_1$,   $\ell$ also crosses $g - \lambda=\gamma f_1$.  
Write
 $$g(X,Y) = aX^2+bXY+cY^2+dX+eY+h.$$
 Since the line $\ell$ is given by 
$Y=0$ and $\ell$ crosses $g$, the line  $\ell$ is not  a component of $g$ and $aX^2+dX+h$ has a root in $\k$. 
If $a =0$, then $\ell$ meets $g$ and $g - \ell$ at infinity, as well as in the affine plane, and so 
 $\mid_g(\ell) = \infty =  \mid_{g -\lambda}(\ell)$.
 If $a \ne 0$, then since $aX^2+dX+h$ has a root $\sigma$ in $\k$, there is another root $\tau \in \k$. The points $(\sigma,0)$ and $(\tau,0)$ are  where $\ell$ crosses $g$, and  
 $\sigma + \tau = -d/a$.  Similarly, $g-\lambda = aX^2+dX+h-\lambda$ has roots $\sigma',\tau' \in \k$ with $\sigma'+\tau' = -d/a$.  The line $\ell$ crosses $g-\lambda$ at $(\sigma',0)$ and $(\tau',0)$, so since $\sigma + \tau = \sigma' +\tau'$, we conclude $\mid_g(\ell) = -d/(2a)= \mid_{g -\lambda}(\ell)$, which proves the claim and completes the proof of the theorem.
\end{proof}

A line {\it bisects a quadrilateral} $Q$ if it bisects the set of pairs of opposite sides of $Q$. 

\begin{corollary} \label{amazing cor}
A line  bisects a quadrilateral $Q$  if and only if it bisects the set of conics defined by quadratics in $\k f_1 + \k f_2 + \k$, where $f_1 $ and $f_2$ are quadratics that define the pairs of opposite sides of $Q$. 

\end{corollary}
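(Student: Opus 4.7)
The plan is to deduce this corollary essentially directly from Theorem~\ref{amazing}, with only a small amount of bookkeeping to translate from the language of quadrilaterals to the hypotheses of that theorem.

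The reverse implication is essentially tautological and I would dispatch it first: the pair $\{f_1, f_2\}$ is a subset of the conics defined by quadratics in $\k f_1 + \k f_2 + \k$, so any line that bisects this larger collection automatically bisects $\{f_1, f_2\}$, which by the definition of ``bisecting a quadrilateral'' (bisecting both pairs of opposite sides) is exactly the condition that $\ell$ bisects $Q$.

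For the forward direction, I would unpack ``$\ell$ bisects $Q$'' to mean ``$\ell$ bisects the pair $\{f_1, f_2\}$,'' and then check the two hypotheses needed to apply Theorem~\ref{amazing}. First, that $f_1$ and $f_2$ are independent: by the definition of a quadrilateral, neither pair of opposite sides is a translation of the other, which forces the degree-$2$ homogeneous parts of $f_1$ and $f_2$ (determined by the directions of the two lines in each pair) to be non-proportional, and this is exactly independence. Second, that $\ell$ meets the zero sets of $f_1$ and $f_2$: each $f_i$ has an entire affine line as a component, and any two lines in the affine plane meet in the projective plane, so $\ell$ certainly meets both zero sets. Having verified both hypotheses, Theorem~\ref{amazing} delivers the conclusion that $\ell$ bisects every conic in $\k f_1 + \k f_2 + \k$.

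There is no real obstacle in the argument beyond this translation; the analytic work has already been done in Theorem~\ref{amazing}, and the corollary is primarily a repackaging in the language of quadrilaterals. The one point to be slightly careful about is the independence check, since it relies on interpreting correctly the clause ``neither pair is a translation of the other'' in the definition of a quadrilateral, but once this is done the proof is a short chain of implications.
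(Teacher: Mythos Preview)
Your proposal is correct and follows essentially the same approach as the paper's proof, which is a single sentence: note that $f_1$ and $f_2$ are independent because $Q$ is a quadrilateral, then invoke Theorem~\ref{amazing}. You have simply made explicit the two hypotheses (independence, and that $\ell$ meets the reducible zero sets automatically) and the trivial reverse implication, all of which the paper leaves implicit.
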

 
 \begin{proof}  The quadratics $f_1$ and $f_2$ are independent since $Q$ is a quadrilateral, so we  
apply Theorem~\ref{amazing}. 
 \end{proof}

\begin{corollary} \label{amazing cor2} Let $Q$ be a quadrilateral with four distinct vertices, one of which is possibly at infinity. 
If a  line  bisects  $Q$, it bisects the set of  conics through the vertices of $Q$.
\end{corollary}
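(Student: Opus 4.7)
The plan is to reduce the statement to Corollary~\ref{amazing cor}. Let $f_1$ and $f_2$ be the quadratics whose zero sets are the two pairs of opposite sides of $Q$; these are independent since, by the definition of a quadrilateral, neither pair of opposite sides is a translation of the other, so the unordered pairs of directions of the two pairs differ and the degree-$2$ parts of $f_1$ and $f_2$ are non-proportional. By Corollary~\ref{amazing cor}, any line bisecting $Q$ bisects every conic defined by a quadratic in $\k f_1 + \k f_2 + \k$. So it suffices to show that every conic through the four vertices of $Q$ is defined by a quadratic in $\k f_1 + \k f_2$.

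Let $v_1, v_2, v_3, v_4$ denote the four distinct vertices. Each $v_i$ is the intersection of two adjacent sides, hence lies on one linear factor of $f_1$ and one of $f_2$, so $v_i$ is a common zero of $f_1$ and $f_2$. (If some $v_i$ is at infinity, I interpret this projectively, via the homogenizations of $f_1, f_2$: the degree-$2$ part must vanish at the corresponding projective point.) Consequently every quadratic in $\k f_1 + \k f_2$ vanishes at all four vertices.

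Next, I would verify that no three of $v_1, \ldots, v_4$ are collinear. The vertices form a $4$-cycle in which adjacency means sharing a side, and any three of the four must include at least two adjacent pairs. If three vertices were collinear on a line $\ell$, each adjacent pair among them would share a side, forcing that side to equal $\ell$; since distinct adjacent pairs correspond to distinct sides of $Q$, this would force two distinct sides to coincide, a contradiction. With four distinct points in the projective plane and no three collinear, a standard linear algebra count in the $6$-dimensional space of quadratics (viewed projectively as in Proposition~\ref{cubic}) shows that they impose four independent conditions, so the subspace of quadratics vanishing at all four $v_i$ has dimension $2$. Since $f_1, f_2$ are independent elements of this subspace, it equals $\k f_1 + \k f_2$. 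Every affine conic through $v_1, \ldots, v_4$ is therefore the zero set of a quadratic in $\k f_1 + \k f_2 \subseteq \k f_1 + \k f_2 + \k$, and applying Corollary~\ref{amazing cor} finishes the proof.

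The main obstacle is keeping the dimension count honest in the presence of a vertex at infinity, where ``passing through'' the vertex is a linear condition on the degree-$2$ homogeneous part of a quadratic rather than on the full polynomial. Once phrased via the standard homogenization correspondence between affine quadratics and degree-$2$ forms in three variables, the argument is routine and mirrors the classical fact that four points in general position in $\mathbb{P}^2$ determine a pencil of conics.
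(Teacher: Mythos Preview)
Your proof is correct and follows the same approach as the paper: you reduce to Corollary~\ref{amazing cor} by showing that the conics through the four vertices are precisely the pencil $\k f_1 + \k f_2$. The paper's proof simply cites as known the fact that four points in general position determine a pencil of conics, whereas you spell out why no three vertices are collinear and carry out the dimension count explicitly (and correctly flag the homogenization needed when one vertex is at infinity).
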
 

\begin{proof}  Since the set of conics through four distinct points in general position is a pencil, we may apply Corollary~\ref{amazing cor}. 
\end{proof}


\begin{corollary}   {\rm (Desargues' Involution Theorem)}  \label{des}
Let ${\mathcal{P}}$ be a pencil of conics in  $\P^2(\k)$, and let $\ell$ be a line in $\P^2(\k)$   that  does not pass through a basepoint of ${\mathcal{P}}$. 
 There is an involution $\Lambda$ on $\ell$ such that if $p$ and $q$ are the points of intersection of $\ell$ and a conic in ${\mathcal{P}}$, then $p$ and $q$ are conjugate under $\Lambda$. 
\end{corollary} 

\begin{proof} Let $p_1,q_1$ be the points of intersection of $\ell$ with a conic defined by $f_1 \in {\mathcal{P}}$, and let $p_2,q_2$ be the points of intersection of $\ell$ with a conic defined by $f_2 \in {\mathcal{P}}$ such that $\{p_1,q_1\} \ne \{p_2,q_2\}$. 
There is a projective transformation $T$ of the projective plane  that carries $\ell$ onto a line $T(\ell)$ such that in some affine chart containing $T(p_1),T(q_1),T(p_2),T(q_2)$, the midpoint of $T(p_1)$ and $T(q_1)$ is the same as the midpoint of $T(p_2)$ and $T(q_2)$. 
In this chart, the dehomogenizations of $f_1$ and $f_2$ are independent by 
 Lemma~\ref{singular}(1), and also in this chart,  
 the line $f(\ell)$  bisects the images of $f_1$ and $f_2$ under $T$.  
 By Corollary~\ref{amazing cor}, $T(\ell)$ bisects all the conics through the vertices of $Q$. Let $\Lambda$ be the reflection on $T(\ell)$ about the midpoint of $T(p_1)$ and $T(q_1)$ (which is also the midpoint of $T(p_2)$ and $T(q_2)$) that sends the point at infinity of $T(\ell)$ (with respect to the given chart) to itself. 
Then $T^{-1}\circ \Lambda \circ T$ is an involution on $\ell$ such that the pairs of points where the conics through the vertices of $Q$   cross $\ell$ are conjugate. 
\end{proof}

\section{Asymptotic pencils and bisector fields}

Since pairs of lines can be viewed as zero sets of  reducible conics, the definition of bisection of conics in the last section subsumes that of bisection of collections of line pairs, and for such collections we single out in the next definition the property that ``everything bisects everything.''


\begin{definition} \label{bis def} A  set $\B$  of  paired lines is a {\it bisector arrangement} if  each line $\ell$ in each pair in $\B$    bisects  $\B$. We denote by $\mid_\B(\ell)$ the midpoint of $\ell$ as a bisector of $\B$.  
 A bisector arrangement is {\it trivial}   if every pair is a translation of every other pair,  
all lines in the arrangement go through the same point, or all lines are parallel.  
A  {\it bisector field} is a nontrivial bisector arrangement that  
 cannot be extended to a larger bisector arrangement. 
\end{definition} 

As will follow from Theorem~\ref{main theorem 1}, a bisector arrangement need not be finite. The trivial nature of a trivial  bisector arrangement is that in these cases the bisector arrangement has the property that each line in the arrangement bisects every pair in the arrangement for uninteresting  reasons:
%
If all the lines in  a collection of pairs of lines meet at a point $p$ in the affine plane, then each line in the arrangement  bisects each  pair with midpoint $p$, while if all the lines in the  pairs are parallel and hence meet at a point at infinity, then each line vacuously bisects each pair with undetermined midpoint. 
If instead, each pair is a translation of every other pair, then no matter the placement of the pairs in the collection, each line in the arrangement  bisects the collection with  midpoint at infinity. 


The next lemma can be viewed as a  uniqueness statement for the pairings in a bisector arrangement.  The bisector configuration in Figure 2(c) shows the statement of the lemma is optimal. 

 \begin{lemma} \label{unique}

 Let  $\B $ be a nontrivial bisector arrangement consisting of two line pairs $\PP_1$ and $\PP_2$. If $\PP$ is a line pair such that $\B \cup \{\PP\}$ is a bisector arrangement extending   $\B$, then there is a line $\ell$ in $\PP$ such that the only line pair $\PP'$   containing $\ell$ for which $\B \cup \{\PP'\}$ is a bisector arrangement extending $\B$ is $\PP' = \PP$.

\end{lemma}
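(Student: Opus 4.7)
The plan is to analyze the constraints that $\B$ imposes on any potential partner of a given line $\ell \in \PP$, and to show that at least one line in $\PP$ is sufficiently constrained that its partner is forced to be unique. Write $\PP_1 = \{a,b\}$ and $\PP_2 = \{c,d\}$ (with possible repetition if $\PP_1$ and $\PP_2$ share a line), and for each $m \in \PP_1 \cup \PP_2$ let $q_m = \mid_\B(m)$, which is a (possibly infinite) point on $m$ or is undetermined if $m$ is a component of both pairs.

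The first step is a general observation: if $\PP'' = \{\ell,\ell''\}$ extends $\B$ to a bisector arrangement, then for each line $m$ with $q_m$ determined and $m\cap\ell$ a well-defined (possibly infinite) point, the condition $\mid_{\PP''}(m)=q_m$ forces the point $m\cap \ell''$ to be the reflection $r_m(\ell) := 2q_m - (m\cap\ell)$ of $m\cap\ell$ through $q_m$. So every partner of $\ell$ is constrained to pass through each such point $r_m(\ell)$. If two of these constraint points are distinct, the partner is uniquely determined as the line through them, and $\ell$ has a unique partner in the sense of the lemma.

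Assume, toward a contradiction, that neither $\ell_1$ nor $\ell_2$ (the two lines of $\PP$) has a unique partner. For each $i=1,2$, the existence of an alternative partner $\ell_i''\ne\ell_{3-i}$ means $\ell_{3-i}$ and $\ell_i''$ are two distinct lines that both pass through every constraint point $r_m(\ell_i)$. Since two distinct lines meet in at most one projective point, all the constraint points for $\ell_i$ coincide at a single point $c_i\in\P^2(\k)$. This forces every line $m\in\PP_1\cup\PP_2$ that contributes a non-vacuous constraint to pass through $c_i$. Running this for both $i=1$ and $i=2$ and comparing, together with the equation $r_m(\ell_i)=2q_m - (m\cap\ell_i)$, gives enough structural information about the configuration of $\{a,b,c,d\}$ to reach a trivial configuration for $\B$ in the generic case: in particular, if all four lines of $\PP_1\cup\PP_2$ give non-vacuous constraints they are forced to be concurrent at $c_1=c_2$, which places $\B$ in the ``all lines through a single point'' trivial case and contradicts nontriviality.

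The main obstacle is the degenerate situation in which many of the $r_m(\ell_i)$ constraints become vacuous, either because $\PP_1$ and $\PP_2$ share a line (the Figure~2(c) regime detected by Lemma~\ref{trap}) or because some intersections $m\cap\ell_i$ or midpoints $q_m$ are infinite in a coordinated way. Here the argument has to be supplemented by a careful choice of which line of $\PP$ to take as $\ell$. Using Lemma~\ref{trap} to pin down the shared-line structure, one shows by direct calculation, paralleling the explicit analysis of the Figure~2(c) configuration in Section~4, that the two remaining non-vacuous constraints on the non-shared line of $\PP$ specify two distinct points whose unique connecting line is the other member of $\PP$, so that this non-shared line has a unique partner. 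The delicate part of the proof is handling all the sub-cases (parallelism of $\ell_i$ to lines of $\B$, lines at infinity in the midpoint computations, lines of $\PP$ coinciding with lines of $\B$) uniformly so as to always identify one line of $\PP$ for which the collapse to a single $c_i$ is impossible.
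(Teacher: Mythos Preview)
Your reflection-constraint idea is exactly the mechanism the paper uses: each line $m\in\PP_1\cup\PP_2$ with determined midpoint $q_m$ forces the partner of $\ell$ to pass through the reflected point $r_m(\ell)$, and two distinct such points pin the partner down. Where your argument and the paper's diverge is in how the degenerate cases are handled, and this is where your proposal has a real gap.

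First, your appeal to Lemma~\ref{trap} is misplaced. That lemma is about asymptotic pencils, not bisector arrangements; the equivalence between the two is precisely Theorem~\ref{main theorem 1}, whose proof \emph{uses} the present lemma. So you cannot invoke Lemma~\ref{trap} here without circularity. Likewise ``paralleling the explicit analysis of the Figure~2(c) configuration in Section~4'' does not point to an actual argument in Section~4 about bisector arrangements.

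Second, the sentence ``one shows by direct calculation \ldots\ that the two remaining non-vacuous constraints on the non-shared line of $\PP$ specify two distinct points'' is the entire content of the hard case, and you have not shown it. The paper's proof is organised as six claims, stratified by how many of the midpoints $\mid_\B(A),\mid_\B(A'),\mid_\B(B),\mid_\B(B')$ are finite, infinite, or undetermined. The delicate case is Claim~5 (exactly two undetermined midpoints), which forces $A=B$ and then argues, via the reflections $\Lambda_{A'},\Lambda_{B'}$ through the two remaining finite midpoints, that $A'\cdot C'\ne B'\cdot C'$ --- exactly the ``two distinct constraint points'' conclusion you need. That argument is not automatic: one must first rule out that both remaining midpoints are infinite (a parallelism contradiction), then show $A=B$, then exclude $C=A$, and finally derive a contradiction from $A'\cdot C'=B'\cdot C'$ using the reflections. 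Your sketch asserts the outcome but supplies none of these steps.

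In short: your generic-case contradiction (all four constraints non-vacuous $\Rightarrow$ all four lines of $\B$ concurrent $\Rightarrow$ $\B$ trivial) is correct and matches the paper's Claim~2, but the degenerate cases require the explicit casework the paper carries out in Claims~3--5, and your proposal does not provide a substitute for it.
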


\begin{proof}  Let $\PP$ be a line pair for which $\B \cup \{\PP\}$ is a bisector arrangement. 
We will prove a series of claims about the statement 
\begin{itemize}
\item[($*$)] 
{\it There is a line $\ell$ in $\PP$ such that the only line pair $\PP'$   containing $\ell$ for which $\B \cup \{\PP'\}$ is a bisector arrangement is $\PP' = \PP$.}  
\end{itemize}
Write 
 $\PP_1 =\{A,A'\}$, $\PP_2=\{B,B'\}$ and $\PP=\{C,C'\}$.  
 
 \smallskip

{\textsc{Claim 1.}} At  most two of the lines in the arrangement $\B$ have
an infinite midpoint.

\smallskip

Suppose without loss of generality that
the bisectors $A,A',B$ have infinite midpoints in $\B$.  
Then $A$ and $A'$ are 
each parallel to exactly one of $B$ and $B'$, and $B$ is parallel to exactly one of $A$ and $A'$, so it follows that each line in each of the two pairs $\{A,A'\}$ and $\{B,B'\}$ is parallel to exactly one line in the other pair, contradicting the fact that $\B$ is a nontrivial bisector arrangement. Thus at most two of the lines in $\B$ have an infinite midpoint.

\smallskip

{\textsc{Claim 2.}} Statement ($*$) is valid if at least three bisectors   in $\B$  have a finite midpoint.

\smallskip



Without loss of generality, suppose $\mid_\B(A),\mid_\B( A'), \mid_\B(B)$ are all finite. 
Since $\B \cup \{ \PP\}$ is a bisector arrangement,  this implies
 $C$ and $C'$ cross each of $A,A',B$. We claim that at least two of the points $A \cdot C', A' \cdot C', B \cdot C'$ are not equal. Suppose otherwise. Then 
 $\mid_\B(B) = \mid_\B(C') = A \cdot A'$ since $B$ and $C'$ are bisectors in $\B$ and go through $A \cdot A'$.  
 The line $C'$ bisects the pair $B,B'$ with midpoint $\mid_\B(C')$, so since $B$ goes through this point, so does $B'$.  But then all four lines in $\B$ go through the point $\mid_\B(C')$, contradicting the fact that $\B$ is a nontrivial bisector arrangement. Thus at least  two of the points $A \cdot C', A' \cdot C', B \cdot C'$ are not equal.  We will show that only the line $C'$ can be paired with $C$ if $\B \cup \{\{C,C'\}\})$ is to be a bisector arrangement. 
 
Let $L,M \in \{A,A',B\}$ such that  $L \cdot C' \ne  M \cdot C'$.  Since $L$ bisects $\B\cup \{\{C,C'\}\}$ and $C$ and $C'$ cross $L$, the point $L \cdot C'$  where $C'$ crosses $L$ is  uniquely determined by $\mid_\B(L)$  and the point $L \cdot C$ where $C$ crosses $L$. Similarly, the point $M \cdot C'$ where $C'$ crosses $M$ is  uniquely determined by $\mid_\B(M)$  and the point $M \cdot C$ where $C$ crosses $M$. Since $L \cdot C' \ne M \cdot C'$, it follows that, given $C$, the line  $C'$ is the unique line for which $\B \cup \{\{C,C'\}\}$ is a bisector arrangement.

\smallskip

{\textsc{Claim 3.}} 
Statement ($*$) is valid if at  least one of the bisectors in   $\B$   has an infinite midpoint and at least one has a finite midpoint. 

\smallskip
Assume $\mid_\B(A) = \infty$. 
Since $A$ bisects $\B \cup \{\{C,C'\}\}$, 
 $A$ is parallel to at least one of $C$ and $C'$, say $C$. 
We will show $C$ is uniquely determined by  $C'$ and $\B$. 
Since $\mid_\B(A)$ is infinite, 
 $A$ is parallel to exactly one of $B$ and $B'$, say $B$, and so $A$, $B$ and $C$ are all parallel.  By {assumption}, at least one of the lines $A,A',B,B'$ has a finite midpoint in $\B$. 
 Since $A$, $B$ and $C$ are all parallel, the midpoints of these lines cannot be finite, and so at least one line $L \in \{A',B'\}$  
 has a finite midpoint. Let $M$ be the other line in this set. 
  The line $L$  is not parallel to 
  $C$ or $C'$ 
  since $\mid_\B(L) $ is finite and $L$ bisects $C,C'$ with this midpoint. 
Thus, 
since $L$ crosses both $C$ and $C'$, the point $C \cdot L$ is uniquely determined by $\mid_\B(L)$ and  $C' \cdot L$.  Since $C$ is parallel to $A$, it follows that $C$ is the unique line for which, given $C'$,  $\B \cup \{\{C,C'\}\}$ is a bisector arrangement.

\smallskip

{\textsc{Claim 4.}} Statement ($*$)  is valid if at most one of the bisectors in $\B$    has an undetermined midpoint. 

\smallskip

By Claim 2, if  three bisectors in $\B$ have finite midpoints, then statement ($*$) is valid. If fewer than three   have finite midpoints, then by  Claim 1 and the assumption that there is at most one undetermined midpoint in $\B$, at least one of the lines in $\B$ has a finite midpoint and at least one has an infinite midpoint, in which case statement ($*$) is valid  by Claim 3.

\smallskip

{\textsc{Claim 5.}} Statement ($*$) is valid if exactly two  bisectors in $\B$ have  undetermined midpoints.

\smallskip

First we claim that at most one of $A,A'$ and at most one of $B,B'$ have an undetermined midpoint. For if $\mid_\B(A)$ and $\mid_\B(A')$  are undetermined, then (a) $A \in \{B,B'\}$ or $A$, $B$ and $B'$ are parallel, and (b) $A' \in \{B,B'\}$ or $A'$, $B$ and $B'$ are parallel. If $A, B,B'$ are all parallel, then (b) implies $A'$ is also parallel to these lines, contrary to the assumption that $\B$ is a nontrivial bisector arrangement. Thus $A \in \{B,B'\}$, and similarly, $A' \in \{B,B'\}$, which again contradicts 
 the fact that $\B$ is nontrivial.  Thus  $\mid_\B(A)$ and $\mid_\B(A')$ cannot both be  undetermined. Similarly, $\mid_\B(B)$ and $\mid_\B(B')$ cannot both be  undetermined.

Without loss of generality, $\mid_\B(A)$ and $\mid_\B(B)$ are undetermined. By Claim 3, we may reduce to the case that  $A'$ and $B'$ both have infinite midpoints or both have finite midpoints. We will rule out the former case. 
Suppose by way of contradiction that 
$\mid_\B(A')$ and $ \mid_\B(B')$ are infinite. 
Then $A'$ is parallel to exactly one of $B,B'$, and $B'$ is parallel to exactly one of $A,A'$. This implies $A'$ is parallel to $B'$; $A'$ is not parallel to $B$;  and $B'$ is not parallel to $A$.  However, $\mid_\B(A)$ is undetermined, so $A$ must be parallel to $B$ and $B'$ or $A \in \{B,B'\}$. This forces $A =B$, which since $A'$ is parallel to $B'$ contradicts the assumption that $\B$ is a nontrivial bisector arrangement. 
This shows that $\mid_\B(A') $ and $ \mid_\B(B')$ cannot both be infinite. 
Therefore, $\mid_\B(A')$ and $\mid_\B(B')$ are finite. 

Next, since $\mid_\B(A)$ is undetermined, either $A$ is parallel to $B$ and $B'$ or $A \in \{B,B'\}$. Since $\mid_\B(B')$ is finite, $A$ is not parallel to $B'$, and so $A  = B$.  
We claim at least one of the two lines $C,C'$ is not the line $A=B$. Suppose otherwise. The fact that $A'$ and $B'$ bisect $C,C'$ imply $\mid_\B(A')$ and $\mid_\B(B')$ lie on the line $A = B = C = C'$, which since
$A'$ bisects $B,B'$ with midpoint $\mid_\B(A')$ implies $\mid_\B(A') = \mid_\B(B')$, with this point lying on $A = B = C = C'$.  But then
  all four lines $A,A',B,B'$ go through the same point, 
 contrary to the assumption that $\B$ is a nontrivial bisector arrangement.  Thus we may assume without loss of generality that $C$ is not the line $A =B$.  

We will show $C'$ is uniquely determined by $\B$ and $C$. 
Let $\Lambda_{A'}$ denote reflection on the line $A'$ about $\mid_\B(A')$, and let $\Lambda_{B'}$ denote reflection on   $B'$ about $\mid_\B(B')$. Since $\B$ is a nontrivial bisector configuration and $A=B$, the lines $A'$ and $B'$ cannot be parallel.  
Also,  $\Lambda_{A'}$ and $\Lambda_{B'}$ are uniquely determined by $\B$.  
Since $\Lambda_{A'}(A' \cdot C) = A' \cdot C'$ and $\Lambda_{B'}(B' \cdot C) = B' \cdot C'$, to prove 
 that $C'$ is determined by $\B$ and $C$, it suffices to show that $A' \cdot C' \ne B' \cdot C'$ since these two points (which are entirely determined by $\B$ and $C$ via the involutions $\Lambda_{A'}$ and $\Lambda_{B'}$) then determine $C'$. 
If  
 $A' \cdot C' = B' \cdot C'$, 
then  $A' \cdot C' = B' \cdot C' = A' \cdot B'$, so that 
\begin{eqnarray*}
C\cdot A' & = &  \Lambda_{A'}( A' \cdot C') \:\: = \:\: \Lambda_{A'}(A' \cdot B') \:\:= \:\:A' \cdot B \:\: \in \:\: B \\
C \cdot B' & = &  \Lambda_{B'}(B' \cdot C') \:\:= \:\: \Lambda_{B'}(A' \cdot B') \:\:= \:\:B' \cdot A \:\:\in \:\:A.
\end{eqnarray*} 
  But since $C$ is not the line $A=B$, this implies $A'$ and $B'$ meet at the same point on $A = B$, contradicting the fact that 
   $\B$ is a nontrivial bisector configuration.  
   %
 Therefore,  $A' \cdot C' \ne B' \cdot C'$ and the proof of Claim 5 is complete.



\smallskip

{\textsc{Claim 6.}} Statement ($*$) is true in all cases. 

\smallskip

If at most one of the bisectors in $\B$ has an undetermined midpoint, then statement ($*$) is true by Claim 4. If exactly two of these lines has an undetermined midpoint, then statement ($*$) is true by Claim 5. Thus it suffices to observe that  three of the four lines in $\B$ cannot have undetermined midpoints. This is  so because in such a case both lines in one of the two pairs $A,A'$ and $B,B'$ must have undetermined midpoints, and, as argued at the beginning of the proof of Claim 5, this leads to a contradiction to the assumption that $\B$ is a nontrivial bisector arrangement. Thus 
statement ($*$) is true in all cases, which proves the lemma. 
\end{proof}


With Lemma~\ref{unique}, we  can now prove the main theorem of the article. 

\begin{theorem} \label{main theorem 1} A set  of line pairs is a bisector field if and only if it is a nontrivial asymptotic pencil. 
%

\end{theorem}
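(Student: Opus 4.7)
\smallskip\noindent\textbf{Proof plan.} The plan is to establish both implications. For the forward direction, suppose $\A$ is a nontrivial asymptotic pencil, generated by independent quadratics $f_1,f_2$. I would first verify that $\A$ is a bisector arrangement: any line $\ell$ appearing in a pair of $\A$ is by definition a component of a reducible conic in $\A$, so Lemma~\ref{main lemma} gives that $\ell$ bisects $\{f_1,f_2\}$, and Theorem~\ref{amazing} then propagates this bisection from $\{f_1,f_2\}$ to the whole affine net $\k f_1+\k f_2+\k$, hence to every pair in $\A$. Nontriviality of $\A$ as a bisector arrangement is a routine check: using Corollary~\ref{generators2 cor} together with the hypothesis that $\A$ is nontrivial as an asymptotic pencil, $\A$ contains either two degenerate hyperbolas with distinct centers or a degenerate hyperbola together with a degenerate parabola, and in either configuration the arrangement is neither a translation family, a concurrent family, nor a family of parallel lines.

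For maximality, suppose some pair $\PP\notin\A$ makes $\A\cup\{\PP\}$ a bisector arrangement. Fix two pairs $\PP_1,\PP_2\in\A$ that already form a nontrivial bisector arrangement. Lemma~\ref{unique} supplies a line $\ell\in\PP$ such that $\PP$ is the unique pair containing $\ell$ that extends $\{\PP_1,\PP_2\}$ to a bisector arrangement. Since $\ell$ bisects $\{\PP_1,\PP_2\}$ (as a member of the putative arrangement $\A\cup\{\PP\}$) and meets both reducible conics, Lemma~\ref{main lemma} places $\ell$ as a component of a reducible conic in $\A$, i.e., $\ell\in\PP^*$ for some $\PP^*\in\A$. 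Because $\A$ is itself a bisector arrangement extending $\{\PP_1,\PP_2\}$, the uniqueness clause in Lemma~\ref{unique} forces $\PP^*=\PP$, contradicting $\PP\notin\A$.

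For the reverse direction, let $\B$ be a bisector field. I would first select two pairs $\PP_1,\PP_2\in\B$ whose defining quadratics $f_1,f_2$ are independent and generate a \emph{nontrivial} asymptotic pencil $\A$; this is handled by splitting on whether $\B$ contains a pair of parallel lines, using Lemma~\ref{singular}(2) to rule out same-midline parallel duplication and the nontriviality of $\B$ (not all lines concurrent, not all lines parallel) to rule out the case that every pair of intersecting lines in $\B$ shares one common center. By the forward direction, $\A$ is itself a bisector field. To see $\B\subseteq\A$: for any $\QQ\in\B$, Lemma~\ref{unique} gives a line $\ell\in\QQ$ uniquely determining $\QQ$ among extensions of $\{\PP_1,\PP_2\}$; since $\ell$ bisects $\{\PP_1,\PP_2\}$, Lemma~\ref{main lemma} puts $\ell$ into some pair $\QQ^*\in\A$, and uniqueness yields $\QQ^*=\QQ$, so $\QQ\in\A$. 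For the reverse inclusion $\A\subseteq\B$: having established $\B\subseteq\A$, any $\PP\in\A$ gives $\B\cup\{\PP\}\subseteq\A$, which as a subset of the bisector arrangement $\A$ is automatically itself a bisector arrangement, so maximality of $\B$ forces $\PP\in\B$. Hence $\B=\A$.

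The hardest part will be the initial selection of $\{\PP_1,\PP_2\}$ at the start of the reverse direction, where independence of $f_1,f_2$ and nontriviality of the generated asymptotic pencil must be secured simultaneously from the single assumption that $\B$ is nontrivial. Once this choice is in place, the argument becomes a tight interplay between Lemma~\ref{unique} (to pin down an unknown pair by one of its lines) and Lemma~\ref{main lemma} (to convert the bisection property into membership in an asymptotic pencil), with the forward direction upgrading $\A$ from a mere pencil to a bisector field so that the final ``subset of a bisector arrangement is a bisector arrangement'' observation closes the argument via maximality of~$\B$.
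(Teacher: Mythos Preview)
Your proposal is correct and follows essentially the same architecture as the paper's proof: Lemma~\ref{main lemma} to move between ``bisects $\{f_1,f_2\}$'' and ``is a component of a pair in $\A$'', Theorem~\ref{amazing} to propagate bisection from the generators to the whole net, and Lemma~\ref{unique} to pin down the second line of an extending pair.

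Two minor remarks on differences of packaging. First, the paper opens the forward direction by invoking Proposition~\ref{quad exists} to produce a quadrilateral $Q$ in $\A$ and then uses Corollary~\ref{amazing cor}; your argument with generic generators $f_1,f_2$ needs those generators to be \emph{reducible} (so that every line meets them, as required by Lemma~\ref{main lemma} and Theorem~\ref{amazing}), which is exactly what Proposition~\ref{quad exists} supplies. Second, in the maximality step your version---fixing the nontrivial base $\{\PP_1,\PP_2\}$ once and for all and applying Lemma~\ref{unique} to the uniquely determined line of $\PP$---is slightly cleaner than the paper's, which locates pairs $\PP_1,\PP_2\in\B$ containing each line of $\PP$ separately and must implicitly handle the possibility that this adaptive $\{\PP_1,\PP_2\}$ is trivial. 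In the reverse direction your sketch of the selection of $\PP_1,\PP_2$ is right in outline but omits the case-split the paper carries out (when $\B$ has no parabolas and the first two hyperbolas chosen are translations of one another, a third hyperbola is brought in); since you flag this as the hardest step, be sure to include that detail when writing it out.
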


\begin{proof}
 Suppose   that $\B$ is   a nontrivial asymptotic pencil.  
  By Proposition~\ref{quad exists}, there is a quadrilateral $Q$ in $\B$ such that $\B $ is the asymptotic pencil of  $Q$.   
  If a line $\ell$ is a line in a pair in $\B$,  then $\ell$ bisects $Q$ by Lemma~\ref{main lemma}, and 
 so $\ell$ bisects $\B$ by 
  Corollary~\ref{amazing cor}. 
  Therefore,   $\B$  is a bisector arrangement. 
  To see next that  $\B$ is a bisector field, let $\PP= \{\ell_1,\ell_2\}$ be a pair of lines such that $\B \cup \{\PP\}$ is a bisector arrangement.  
 We show  $\PP \in \B$. 

There are two independent reducible quadratics $f_1$ and $f_2$ whose zero sets are the pairs of opposite sides of $Q$.  By Lemma~\ref{generators}, $f_1$ and $f_2$ generate the asymptotic pencil $\B$.  Since $f_1$ and $f_2$ are reducible conics, every line meets the pairs of lines defined by $f_1$ and $f_2$ (possibly at infinity), and so since 
by assumption  $\ell_1$ and $\ell_2$ bisect $\B$ and hence bisect $\{f_1,f_2\}$, 
Lemma~\ref{main lemma} implies there are 
 line pairs $\PP_1$ and $\PP_2$ in $\B$ such that $\ell_1 \in \PP_1$ and $\ell_2 \in \PP_2$.  
Since $\B \cup \{\PP\}$ is a bisector arrangement extending $\B$, the collection $\{\PP_1,\PP_2,\PP\}$ is a bisector arrangement extending the bisector arrangement $\{\PP_1,\PP_2\}$. 
 Since $\PP_1$ and $\PP$ share $\ell_1$ and $\PP_2$ and $\PP$ share $\ell_2$, Lemma~\ref{unique} implies $\PP=\PP_1$ or $\PP = \PP_2$. Thus $\PP \in \B$, as claimed.

Conversely, suppose $\B$ is a bisector field. We claim first that $\B$ contains two pairs of lines that are defined by 
 independent quadratics.  If $\B$ contains a degenerate parabola defined by a quadratic $f_1$, then since $\B$ is a nontrivial bisector arrangement, there is a reducible quadratic $f_2$ with zero set in $\B$ such that not all four linear components of $f_1$ and $f_2$ are parallel. Thus $f_1$ and $f_2$ 
 are independent. 
If $\B$ does not contain a degenerate parabola, then choose $f_1$ to be any degenerate hyperbola with zero set in $\B$ and, using the fact that $\B$ is a nontrivial bisector arrangement, choose $f_2$  to be a degenerate hyperbola in $\B$ such that $V(f_1) \ne V(f_2)$. If $f_1$ is not a translation of $f_2$, then $f_1$ and $f_2$ are independent quadratics defining line pairs in $\B$. 
If, however, $f_2$ is a translation of $f_1$, then since $\B$ is nontrivial and by assumption consists only of hyperbolas, there is a hyperbola $f_3$ that is not a translation of $f_1$ or $f_2$. Choose whichever of $f_1$ and $f_2$ does not share a center with $f_3$, and this quadratic and $f_3$ will be independent quadratics that define line pairs in $B$.    

In all cases, we have found a pair of independent quadratics $f_1$ and $f_2$ that define line pairs in $\B$ and  have the additional property that if $f_1$ and $f_2$ are hyperbolas, then $f_1$ and $f_2$ do not share the same center. This  implies
the line pairs defined by $f_1$ and $f_2$ 
  form a nontrivial bisector arrangement contained in $\B$. 
  
    We claim $\B $ is the asymptotic pencil generated by $f_1$ and $f_2$. 
Let $f $ be a quadratic that defines a line pair in $ \B$. The line pairs that are the zero sets of $f_1,$ $f_2$ and $f$
 form
 a bisector arrangement, and so by Lemma~\ref{main lemma}, the linear components of $f$ are linear components of reducible quadratics $g_1$ and $g_2$ in 
$\k f_1 + \k f_2+ \k$. We have established in the first part of the proof that the set of  reduced conics in $\k f_1 + \k f_2+ \k$, i.e., the asymptotic pencil generated by $f_1$ and $f_2$, is the set of line pairs of a bisector field. Thus the line pairs defined by $f_1,f_2,g_1$ 
 form
 a bisector arrangement, as do those of $f_1,f_2,g_2$.    Since the bisector arrangement consisting of the line pairs of $f_1$ and $f_2$ 
  is nontrivial and $f$ and $g_1$ share a linear component, and $f$ and $g_2$ share the other linear component of $f$,  Lemma~\ref{unique} implies $V(f)= V(g_1)$ or $V(f) = V(g_2)$. Either way,    $f \in  \k f_1 + \k f_2+\k$, and hence 
  the pair of lines defined by $f$ is in 
  the asymptotic pencil generated by $f_1,f_2$, which shows the pairs of lines in  $\B$ are in this asymptotic pencil.  
 Therefore,
  since $\B$ is a bisector field contained in a bisector arrangement consisting of the set of line pairs in the asymptotic pencil generated by $f_1,f_2$, the maximality of $\B$ implies these two arrangements are equal, 
  which proves the theorem. 
\end{proof}

  \medskip

{\noindent}{\it Funding.}
No financial support was received for this manuscript.

\smallskip



{\noindent}{\it Conflict of interest.} The authors declare that there is no conflict of interest in this work.

\smallskip

{\noindent}{\it Data availability.} Not applicable.

\end{document}